%
% File for submission of final version to ArXiv.
%

\documentclass[12pt]{amsart}
\usepackage{amsmath} 
\usepackage{amssymb}
\usepackage{amsfonts}
\usepackage{a4}

\usepackage{ndstye}

\newcommand{\bc}{\mathbf C}

\newcommand{\br}{\mathbf R}

\newcommand{\bn}{\mathbf N}
\newcommand{\Cal}{\mathcal}

\newcommand{\gs}{\gtrsim}

\newcommand{\hess}{\operatorname{Hess}}

\newcommand{\id}{\operatorname{Id}}

\newcommand{\im}{\operatorname{Im}}

\newcommand{\ls}{\lesssim}

\newcommand{\mn}[1]{\Vert#1\Vert}

\newcommand{\mnorm}[1]{\Vert#1\Vert}

\newcommand{\ol}{\overline}

\newcommand{\re}{\operatorname{Re}}

\newcommand{\restr}[1]{\big|_{#1}}

\newcommand{\set}[1]{\left\{\,#1\,\right\}}

\newcommand{\st}{\Sigma _2}
\newcommand{\supp}{\operatorname{\rm supp}}

\newcommand{\w}[1]{\langle #1\rangle }

\newcommand{\wf}{\operatorname{WF}}

\newcommand{\wt}{\widetilde}

%%% Local Variables: 
%%% mode: plain-tex
%%% TeX-master: "nddef"
%%% End: 

\hyphenation{hyp-er-bol-ic osc-il-lat-ory propo-sition Uhl-mann
para-metri-ce para-metri-ces local-i-zation local-ize homo-gene-ous
homo-ge-nei-ty Mori-moto pseudo-differ-ential ortho-gonal ortho-normal
satis-fy satis-fies propa-gate propa-gation differ-ential
differ-entiate differ-entiation differ-entiations hexa-gonal
di-ago-nal di-ago-nal-ize-able acad-e-my acad-e-mies af-ter-thought
anom-aly anom-alies an-ti-deriv-a-tive an-tin-o-my an-tin-o-mies
apoth-e-o-ses apoth-e-o-sis ap-pen-dix ar-che-typ-al as-sign-a-ble
as-sist-ant-ship as-ymp-tot-ic asyn-chro-nous at-trib-uted
at-trib-ut-able bank-rupt bank-rupt-cy bi-dif-fer-en-tial blue-print
busier busiest cat-a-stroph-ic cat-a-stroph-i-cally con-gress
cross-hatched corre-sponding corre-spondingly data-base de-fin-i-tive
de-riv-a-tive dis-trib-ute depend-ing depend-ingly dri-ver dri-vers
eco-nom-ics econ-o-mist elit-ist equi-vari-ant ex-quis-ite
ex-tra-or-di-nary flow-chart for-mi-da-ble forth-right friv-o-lous
ge-o-des-ic ge-o-det-ic geo-met-ric griev-ance griev-ous griev-ous-ly
hexa-dec-i-mal ho-lo-no-my ho-mo-thetic ideals idio-syn-crasy
in-fin-ite-ly in-fin-i-tes-i-mal ir-rev-o-ca-ble key-stroke
lam-en-ta-ble light-weight mal-a-prop-ism man-u-script mar-gin-al
meta-bol-ic me-tab-o-lism meta-lan-guage me-trop-o-lis
met-ro-pol-i-tan mi-nut-est mol-e-cule mono-chrome mono-pole
mo-nop-oly mono-spline mo-not-o-nous mul-ti-fac-eted mul-ti-plic-able
non-euclid-ean non-iso-mor-phic non-smooth par-a-digm par-a-bol-ic
pa-rab-o-loid pa-ram-e-trize para-mount pen-ta-gon phe-nom-e-non
post-script pre-am-ble pro-ce-dur-al pro-hib-i-tive pro-hib-i-tive-ly
pseu-do-dif-fer-en-tial pseu-do-fi-nite pseu-do-nym qua-drat-ic
quad-ra-ture qua-si-smooth qua-si-sta-tion-ary qua-si-tri-an-gu-lar
quin-tes-sence quin-tes-sen-tial re-arrange-ment rec-tan-gle
ret-ri-bu-tion retro-fit retro-fit-ted right-eous right-eous-ness
ro-bot ro-bot-ics sched-ul-ing se-mes-ter semi-def-i-nite
semi-ho-mo-thet-ic set-up se-vere-ly side-step sov-er-eign spe-cious
spher-oid spher-oid-al star-tling star-tling-ly sta-tis-tics
sto-chas-tic straight-est strange-ness strat-a-gem strong-hold
sum-ma-ble symp-to-matic syn-chro-nous topo-graph-i-cal tra-vers-a-ble
tra-ver-sal tra-ver-sals treach-ery turn-around un-at-tached
un-err-ing-ly white-space wide-spread wing-spread wretch-ed
wretch-ed-ly Brown-ian Eng-lish Euler-ian Feb-ru-ary Gauss-ian
Grothen-dieck Hamil-ton-ian Her-mit-ian Jan-u-ary Japan-ese Kor-te-weg
Le-gendre Lip-schitz Lip-schitz-ian Mar-kov-ian Noe-ther-ian
No-vem-ber Rie-mann-ian Schwarz-schild Sep-tem-ber singu-lari-ty
singu-lari-ties}

\hsize=160  mm
\vsize=240 mm

\numberwithin{equation}{section}

\begin{document}

\baselineskip 16pt
\lineskip 2pt
\lineskiplimit 2pt

\title[Subprincipal]{Operators of subprincipal type} 
\author[NILS DENCKER]{{\textsc Nils Dencker}}
\address{Centre for Mathematical Sciences, University of Lund, Box 118,
SE-221 00 Lund, Sweden}
\email{dencker@maths.lth.se}

\subjclass[2010]{35S05 (primary) 35A01, 58J40, 47G30 (secondary)}

\keywords{Solvability, pseudodifferential operator, subprincipal symbol}

\maketitle

\thispagestyle{empty}

\section{Introduction}

We shall consider the solvability for a classical pseudodifferential
operator $P \in  
{\Psi}^m_{cl}(M)$ on a $C^\infty$ manifold $M$. This means that $P$
has an expansion $p_m + p_{m-1} + \dots$ where $p_k  \in S^{k}_{hom}$
is homogeneous of 
degree $k$, $\forall\, k$, and $p_m = {\sigma}(P)$ is the principal
symbol of the operator. A pseudodifferential operator is said to be 
of principal type if the 
Hamilton vector field $H_{p_m}$ of the principal symbol does not
have the radial direction ${\xi}\cdot\partial_{\xi}$ on $p_m^{-1}(0)$,
in particular $H_{p_m} \ne 0$. We shall consider the case when the principal symbol
vanishes of at least second order at an involutive manifold~$\st$, then~$P$
is not of principal type. 

$P$ is locally solvable at a compact set $K \subseteq M$
if the equation 
\begin{equation}\label{locsolv}
Pu = v 
\end{equation}
has a local solution $u \in \Cal D'(M)$ in a neighborhood of $K$
for any $v\in C^\infty(M)$
in a set of finite codimension.  
We can also define microlocal solvability of~$P$ at any compactly based cone
$K \subset T^*M$, see Definition~\ref{microsolv}.

For pseudodifferential operators of principal type, it is
known~\cite{de:nt} \cite{ho:nec}  that 
local solvability is equivalent to condition (${\Psi}$) 
on the principal symbol, which means that
\begin{multline}\label{psicond} \text{$\im ap_m$
    does not change sign from $-$ to $+$}\\ 
 \text{along the oriented
    bicharacteristics of $\re ap_m$}
\end{multline}
for any $0 \ne a \in C^\infty(T^*M)$. The oriented bicharacteristics are
the positive flow-out of the Hamilton vector field $H_{\re ap_m} \ne
0$ on which $\re ap_m =0$, these are also called
semibicharacteristics of~$p_m$.
Condition~\eqref{psicond} is invariant under multiplication of
~$p_m$ with non-vanishing factors, and symplectic changes of
variables, thus it is invariant under conjugation of ~$P$ with elliptic
Fourier integral operators. Observe that the sign changes
in~\eqref{psicond} are reversed
when taking adjoints, and that it suffices to check~\eqref{psicond}
for some $a \ne 0$ for which $H_{\re ap} \ne 0$
according to~\cite[Theorem~26.4.12]{ho:yellow}.

For operators which are not of principal type, the situation is
more complicated and the solvability may depend on the lower order terms. 
When the set $\st$ where the principal symbol
vanishes of second order is  involutive, the subprincipal symbol
${\sigma}_{sub}(P) = p_{m-1}$ is invariantly defined
at $\st$. In fact, on $\st$ it is equal to the {\em refined
principal symbol}, see \cite[Theorem~18.1.33]{ho:yellow}.

In the case where the principal symbol is real and vanishes of at
least second order  at 
the involutive manifold there are several results, mostly in the case
when the principal symbol is a product of real symbols of principal
type. Then the operator is not solvable if the imaginary part of the
subprincipal symbol has 
a sign change of finite order on a bicharacteristics of one the factors of
the principal symbol, see~ \cite{Ego}, \cite{Pop}, \cite{Wen1}
and~\cite{Wen2}.  

This necessary condition for solvability has been extended to some
cases when the principal symbol is real and vanishes of second order at the
involutive manifold. The conditions for solvability then involves the sign
changes of the imaginary part of the subprincipal symbol on the limits
of bicharacteristics 
from outside the manifold, thus on the leaves of the symplectic foliation of
the manifold, see~\cite{MU1}, \cite{MU2}, \cite{Men}
and~\cite{Yama2}. This has been extended to more general limit 
bicharacteristics of real principal symbols in~\cite{de:limit}.

When~$\st$ is not involutive, there are examples where the
operator is solvable for any lower order terms. For 
example when~ $P$ is effectively hyperbolic, then even the Cauchy problem
is solvable for any lower order term,
see ~\cite{ho:cauchy} and~\cite{Nishi}. 
There are also results in the cases when the principal symbol is a product of
principal type symbols not satisfying condition~ (${\Psi}$),
see~ \cite{CT},  \cite{GT},  \cite{Gold}, \cite{Treves} and~\cite{Yama1}.

In the present paper, we shall consider the case when the principal
symbol (not necessarily real valued) vanishes of at least second order at a
non-radial involutive manifold~$\st$. We shall assume that the
subprincipal symbol is   
of principal type with Hamilton vector field tangent to~$\st$ at the
characteristics, but transversal to the symplectic leaves of~$\st$. We
shall also assume that the subprincipal 
symbol is essentially constant on the symplectic leaves of~$\st$
by~\eqref{cond1}, and does not satisfying
condition (${\Psi}$), see Definition~\ref{subpsidef}. In the case when
the sign change is of infinite order, we will need a condition on the rate of
vanishing of both the Hessian of the principal symbol and the
complex part of the gradient of the subprincipal 
symbol on the semibicharacteristic of the subprincipal 
symbol, see
condition~\eqref{cond2}. Under these conditions, $P$ is not
solvable in a neighborhood of the semibicharacteristic, see
Theorem~\ref{mainthm} which is the main result of the paper. In this
case $P$ is an evolution operator, see~\cite{CCP} and~\cite{CPT} 
for some earlier results on the
solvability of evolution operators.

\section{Statement of results}

Let 
${\sigma}(P) = p  \in S^{m}_{hom}$ be the homogeneous principal symbol, we
shall assume that 
\begin{equation}
 {\sigma}(P) \ \text{vanishes of at least second order at ${\Sigma}_2$}
\end{equation}
where
\begin{equation}
 {\Sigma}_2 \quad \text{is a non-radial involutive manifold}
\end{equation}
Here non-radial means that the radial direction
$\w{{\xi},\partial_{\xi}}$ is not in the span of
the Hamilton vector fields of the manifold, i.e., not equal to $H_{f}$ on
${\Sigma}_2$ for some $f \in C^1$ vanishing at ${\Sigma}_2$. 
Then by a change of homogeneous symplectic coordinates we may assume that 
\begin{equation}\label{sigma2} 
 {\Sigma}_2 = \set{{\xi}' = 0} \qquad{\xi} = ({\xi}', {\xi}'') \in
 \br^k\times \br^{n-k}
\end{equation}
for some $k > 0$,
this can be achieved by conjugation by elliptic Fourier integral operators.
If $P$ is of principal type near $\st$ then, since solvability
is an open property, we find that a necessary condition for $P$ to be
solvable at $\st$ 
is that condition $({\Psi})$ for the principal symbol is satisfied in some
neighborhood of $\st$. Naturally, this condition is empty
on $\st$ where we instead need some conditions on the subprincipal symbol
\begin{equation} 
p_s = p_{m-1} + \frac{i}{2}\sum_{j}^{}\partial_{x_j}\partial_{{\xi}_j}p
\end{equation} 
which is equal to $p_{m-1}$ on $\st$ and invariantly defined  as a
function on $\st$ under  
symplectic changes of coordinates and conjugation with elliptic
pseudodifferential operators. 
(In the Weyl quantization, the subprincipal symbol is equal to $ p_{m-1}  $.)
When composing~$P$ with an elliptic
pseudodifferential operator $C$, the value of the subprincipal
symbol of $CP$ is equal to $cp_s + \frac{i}{2}H_p c = cp_s$
at $\st$, where $c = {\sigma}(C)$. 
Observe that the subprincipal symbol is complexly
conjugated when taking the adjoint of the operator.

Let  $T{\Sigma}_2^{\sigma}$ be the symplectic polar to $T
{\Sigma}_2$, which spans the symplectic leaves of $\st$. If $\st =
\set{{\xi}'= 0}$, 
$x= (x',x'') \in \br^k\times \br^{n-k}$, then the leaves are
spanned by  $\partial_{x'}$. Let
\begin{equation}
 T^{\sigma}{\Sigma}_2 = T{\Sigma}_2/T{\Sigma}_2^{\sigma}
\end{equation}
which is a symplectic space over ${\Sigma}_2$ which in these
coordinates is given by
\begin{equation}\label{tsigmadef}
T^{\sigma}\st =  
\set{((x_0,0,{\xi}_0''); (0,y'',0,{\eta}'')):\ (y'',{\eta}'') \in T^*\br^{n-k}}
\end{equation} 
Next, we are going to study the Hamilton vector field $H_{p_{m-1}}$ at $\st$.
If  $H_{p_{m-1}} \subseteq T {\Sigma}_2$ at $\st$ then we find that  
$ d p_s$ vanishes on $T{\Sigma}_2^{\sigma}$ so $dp_s$
is well defined on $T^{\sigma}{\Sigma}_2 $. 
In fact, $p_s = p_{m-1}$ on~$\st$ so if we choose coordinates so
that~\eqref{sigma2} holds, then $H_{p_{m-1}} \subseteq T {\Sigma}_2 $ is
equivalent to 
\begin{equation}\label{hsub}
H_{p_{m-1}}{\xi}'  = -\partial_{x'}p_{m-1} = -\partial_{x'}p_s = 0
\qquad \text{when ${\xi}'=0$} 
\end{equation}
which is invariant under
multiplication with non-vanishing factors when $p_s = 0$. 
Let $H_{p_s}$ be the Hamilton vector field of $p_s$
with respect the symplectic structure on the symplectic manifold
$T^{\sigma}\st$. In the chosen coordinates we have
$$
H_{p_s} = 
\partial_{{\xi}''}p_s \partial_{x''} - \partial_{x''}p_s
\partial_{{\xi}''}
$$ 
modulo $\partial_{x'}$, which is non-vanishing if
$\partial_{x''{\xi}''}p_s  \ne 0$. Since $p_s = p_{m-1}$ on~$\st$,
the difference between $H_{p_{m-1}}$ 
and $H_{p_s}$ is tangent to the leaves of ~$\st$.
Actually, since the subprincipal symbol is only well defined on $ \st $, 
the vector field $H_{p_s}$ is only well-defined up to terms tangent to the leaves.

Because of that,  we would need
that the subprincipal symbol ~$p_s$ is constant on the leaves of~$\st$, but 
that condition is {\em not} invariant under multiplication with
non-vanishing factors when $p_s \ne 0$. Instead
we shall use the following invariant condition:
\begin{equation}\label{cond1}
\left|d p_s\restr {TL} \right| \le C_0 |p_s| 
\end{equation}
for any leaf $L$ of $\st$. 
Then $ p_s $ is constant on the leaves modulo non-vanishing factors, 
according to the following lemma.

\begin{lem}\label{constsublem}
        If  $ d p_s\restr{T^{\sigma}{\Sigma}_2} \ne 0  $, then condition~\eqref{cond1} is equivalent to the
        fact that $p_s$ is 
        constant on the leaves of~$\st$ after multiplication with a
        smooth non-vanishing factor. Thus, if $\st = \set{{\xi}'=0}$ then~\eqref{cond1} gives
        $p_s(x,0,{\xi}'') = c(x,{\xi}'')q(x'',{\xi}'')$ with $0 \ne c \in C^\infty$. 
\end{lem}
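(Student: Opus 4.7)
The plan is to verify both directions of the equivalence, with the ``only if'' direction requiring most of the work.

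For the \emph{easy direction}, assume $p_s = c \cdot q$ with $c \in C^\infty$ non-vanishing and $q$ a function of $(x'', {\xi}'')$ alone, hence constant on the leaves. Since $dq\restr{TL} = 0$, we get $dp_s\restr{TL} = q \cdot dc\restr{TL}$, and so $|dp_s\restr{TL}|/|p_s| = |dc\restr{TL}|/|c|$ is locally bounded, which is exactly~\eqref{cond1}.

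For the \emph{hard direction}, assume~\eqref{cond1} together with $dp_s\restr{T^{\sigma}\st} \ne 0$. I would first show that $\set{p_s = 0}\cap \st$ is a union of leaves: in the coordinates~\eqref{sigma2}, condition~\eqref{cond1} reads $|\partial_{x'_j}p_s| \le C_0 |p_s|$, and a Gronwall estimate along line segments in a leaf forces $p_s$ to vanish identically on a leaf as soon as it has one zero. Next, define the candidate factor $q(x'', {\xi}'') := p_s(0, x'', 0, {\xi}'')$ via the transversal section $\set{x' = 0}$; its zero set consists precisely of those leaves on which $p_s$ vanishes identically. The hypothesis $dp_s\restr{T^{\sigma}\st} \ne 0$ evaluated at $x' = 0$ gives $dq \ne 0$ on $\set{q = 0}$, so this zero set is a smooth hypersurface of $T^{\sigma}\st$.

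With the geometry in place, I would apply smooth (Hadamard) division. After a local coordinate change in $(x'', {\xi}'')$ making $q$ one of the coordinates, $p_s$ vanishes to first order on $\set{q = 0}$, so Hadamard's lemma yields a smooth factor $c$ with $p_s = c \cdot q$; uniqueness of $c$ off the hypersurface lets local factorizations patch to a global one. To see $c$ is non-vanishing: where $q \ne 0$, $c = p_s/q$ is non-zero because $p_s$ has no zeros on a leaf with $q \ne 0$; where $q = 0$, $c$ equals the partial derivative of $p_s$ in the $q$-direction at the hypersurface, which is non-zero precisely because $dp_s\restr{T^{\sigma}\st} = c\, dq\restr{T^{\sigma}\st}$ there and the left-hand side is non-zero by hypothesis. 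The main obstacle will be this last step: combining~\eqref{cond1} (to force the zero set of $p_s$ to be a union of leaves and to make $q$ well defined via the transversal section) with the transversality hypothesis $dp_s\restr{T^{\sigma}\st} \ne 0$ (to ensure first-order vanishing along $\set{q = 0}$), so that the quotient $p_s/q$ extends smoothly across the hypersurface without acquiring new zeros.
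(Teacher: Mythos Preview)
There is a genuine gap in your Hadamard division step: you treat $q$ as if it were real-valued. The subprincipal symbol $p_s$, and hence $q(x'',{\xi}'')=p_s(0,x'',0,{\xi}'')$, is complex-valued. The set $\{q=0\}=\{\re q=0\}\cap\{\im q=0\}$ need not be a real hypersurface; depending on whether $d\re q$ and $d\im q$ are linearly independent it can have real codimension two. Even when it is a hypersurface, smooth division by a complex-valued $q$ with $dq\ne 0$ is not automatic (think of $\bar z/z$ near the origin). The hypothesis $dp_s\restr{T^{\sigma}\st}\ne 0$ only guarantees that \emph{some} real linear combination of $d\re q$, $d\im q$ is nonzero, so you cannot straighten $q$ to a single real coordinate and invoke Hadamard's lemma.

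The paper avoids dividing by $q$ altogether. On the open set $\{p_s\ne 0\}$ it integrates the logarithmic derivative $\partial_{x'}\log p_s$ (bounded by~\eqref{cond1}) along the leaf to obtain the factor $c$ directly, with $c=1$ on the initial slice $x'=x_0'$. To cross the zero set it picks $z\in\bc$ with $d\re(zp_s)\restr{T^{\sigma}\st}\ne 0$ and uses the \emph{real} hypersurface $S=\{\re(zp_s)=0\}$ as the interface: the log-integration is carried out separately on $S_\pm=\{\pm\re(zp_s)>0\}$, giving smooth $c_\pm$, $q_\pm$ there, and then $c_\pm$ and their derivatives are matched across $S$ using that the normal derivative $\partial_{\nu}q$ does not vanish on $S\cap\{q=0\}$. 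Your Gronwall argument and the definition of $q$ via a transversal slice are correct and are exactly what the paper uses; to complete the proof you should replace the Hadamard step by this log-integration and match across the real hypersurface $S$ rather than across the complex zero locus $\{q=0\}$.
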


\begin{proof}
        Choose coordinates so that $\st = \set{{\xi}'= 0}$. If $p_s \ne 0$ at a point
        $w_0 \in \st$ then~\eqref{cond1} gives that $\partial_{x'}\log p_s$ is
        uniformly bounded near $w_0$, where $\log p_s$ is a branch of the
        complex logarithm. Thus, by integrating with respect
        to~$x'$ in a simply connected neighborhood starting at $x' = x'_0$ 
        we find that
        \begin{equation}\label{factcond}
        p_s(x,0,{\xi}'') =
        c(x,{\xi}'')q(x'',{\xi}'')
        \end{equation}
        where $q(x'',{\xi}'') = p_s(x_0', x'',0,{\xi}'')\in C^\infty$ so  $0 \ne
        c \in C^\infty$ satisfies $c(x'_0,x'',{\xi}'') = 1$. When $p_s = 0$ we
        find that $d \re zp_s 
        \restr {T^{\sigma}\st} \ne 0$ for some $z \in \bc\setminus 0$
        by assumption. Thus we obtain locally that 
        \begin{equation*}
        p_s(x,0,{\xi}'') =
        c_\pm(x,{\xi}'')q_\pm(x'',{\xi}'') \qquad \text{on $S_\pm = \set{\pm
                        \re zp_s(x,0,{\xi}'') > 0}$}
        \end{equation*}
        where $q_\pm (x'',{\xi}'') = p_s(x_0',x'',0,{\xi}'')$, $0 \ne c_\pm
        \in C^\infty$ and $c_\pm(x'_0,x'',{\xi}'') = 1$ on $S_\pm$. Then we find that
        $p_s^{-1}(0)$ is independent of $x'$ and 
        $$
        \partial_{x''}^{\alpha}\partial_{{\xi}''}^{\beta} q_\pm(x'',{\xi}'')
        = \partial_{x''}^{\alpha}\partial_{{\xi}''}^{\beta}
        p_s(x_0',x'',0,{\xi}'') \qquad \forall\, {\alpha}\, {\beta} \qquad
        \text{on $S_\pm$} 
        $$ 
        so by taking the limit 
        at $S = \set{\re zp_s= 0}$ we find that the functions $q_\pm$ extend to
        $q \in C^\infty$.  
        Since $c_+ q = c_- q = p_s$ at $S$ we find that
        $c_+ = c_-$ at $S$ when $q \ne 0$. When $q = 0$ at $S$ we
        may differentiate in the normal direction of $S$ to obtain that
        $c_-\partial_{\nu} q = c_+ \partial_{\nu} q$ and since $\partial_{\nu} q
        \ne 0$ the functions $c_\pm$ extend to a
        continuous function $c$. By differentiating and taking the limit we
        find that 
        \begin{equation*}
        \nabla c_- q + c\nabla q = \nabla p_s = \nabla c_+ q + c\nabla q
        \qquad\text{at $S$} 
        \end{equation*}
        which similarly gives that $\nabla c_- = \nabla c_+$ at $S$ so $c
        \in C^1$. By repeatedly differentiating $c_\pm q$ we find by induction
        that $c \in C^\infty$, so
        we get smooth quotients ~$c$ and~ $q$ in~\eqref{factcond}. 
\end{proof}

Now, a
semibicharacteristic of $p_s$ will be a bicharacteristic of $\re ap_s$ on
$T^{\sigma}\st$, $C^\infty \ni a \ne 0$, with the natural orientation.
Observe that condition~\eqref{hsub} is  only  invariant under
multiplication with non-vanishing factors when $p_s = 0$. 

\begin{defn}
We say that the operator $P$ is of\/ {\em subprincipal type} if the
following hold when $p_s = 0$ on $\st$: $H_{p_{m-1}}\restr{{\Sigma}_2}
\subseteq T {\Sigma}_2$,  
\begin{equation}\label{subprinc}
 d p_s\restr{T^{\sigma}{\Sigma}_2} \ne 0 
\end{equation}
and the corresponding Hamilton vector field  $H_{p_s}$ of ~\eqref{subprinc}
does not have the radial direction. We call  $H_{p_s}$  the {\em
subprincipal Hamilton vector field} and the (semi)bicharacteristics are
called the {\em subprincipal 
(semi)bicharacteristics} on ${\Sigma}_2$. 
\end{defn}

Clearly, if~\eqref{sigma2} holds, then the condition that the Hamilton
vector field does not have the radial direction means that $
\partial_{\xi''}p_s \ne 0 $ 
or $\partial_{x''}p_s \not \parallel {\xi}''$ when $p_s = 0$ on
${\Sigma}_2  = \set{{\xi}'=0}$. 

In the case when the principal symbol ~$p$ is real, then a necessary
condition for solvability of the operator is that the
imaginary part of the subprincipal symbol does not change 
sign from $-$ to $+$ when going in the positive direction on a $ C^\infty $ limit
of normalized bicharacteristics of the principal symbol~$p$ at $ \st $,
see~\cite{de:limit}. When $p$ vanishes of exactly second order on~ $\st  = \set{{\xi}'=0}$ such limit
bicharacteristics are tangent to the leaves of~ $\st$. 
In fact, then Taylor's formula gives $ H_p =  \w{B\xi', \partial_{x'}} + \Cal O(|\xi'|^2)$ where $ B \ne 0 $, 
so the normalized Hamilton vector fields have limits that are tangent to the leaves.
When the principal symbol is proportional to a real valued valued symbol, this
gives examples of non-solvability when the subprincipal symbol is not
constant on the leaves of~$\st$. Thus condition~\eqref{cond1} is essential if 
there are no other conditions on the principal symbol.

\begin{rem}\label{realvalrem}
If $p_s$ is real valued, then by the proof of Lemma~\ref{constsublem}
it follows from~\eqref{cond1} that $p_s$ has constant sign on the
leaves of $\st$, since then $c > 0$ in~\eqref{factcond}.
\end{rem}

\begin{defn}\label{subpsidef}
We say that $P$ satisfies condition Sub(${\Psi}$) if 
$\im a p_s$ does not change sign from $-$ to $+$ when going in the 
positive direction on the subprincipal bicharacteristics of $\re
a p_s$  for any $0 \ne a \in C^\infty$. 
\end{defn}

Thus, condition Sub(${\Psi}$) is condition (${\Psi}$) given by~\eqref{psicond} on the subprincipal symbol $ p_s $.
Observe that since $p_s$ is only defined on $\st$, the Hamilton vector
field $H_{p_s}$ is only well defined in $T^{\sigma}{\Sigma}_2 =
T{\Sigma}_2/T{\Sigma}_2^{\sigma}$, thus it is well defined modulo
$\partial_{x'}$. But if~\eqref{cond1} holds then we find that
Sub(${\Psi}$) is a condition on~$p_s$ with respect to the 
symplectic structure of~$T^{\sigma}{\Sigma}_2$.
In fact,  by the invariance of  condition (${\Psi}$) given by~\cite[Lemma 26.4.10]{ho:yellow} 
condition Sub(${\Psi}$)
holds for any $a \ne 0$ such that $H_{\re ap_s} \ne 0$, so we may assume
by Lemma~\ref{constsublem} that $p_s$ is constant on the leaves of~$\st$.

Since condition Sub(${\Psi}$) is invariant under symplectic
changes of variables and multiplication with non-vanishing functions,
it is invariant under conjugation of the operator by elliptic
Fourier integral operators. Observe that the sign
change is reversed when taking the adjoint of the operator. 

Recall that the Hessian of the principal symbol $\hess p$ is the 
quadratic form given by $d^2 p$ at $\st$ which is defined on
the normal bundle $N \st$, since it vanishes on $T\st$.
By the calculus $\hess p$ is invariant, modulo non-vanishing
smooth factors, under symplectic 
changes of variables and multiplication of $P$ with elliptic
pseudodifferential operators.  

Next, we assume that condition  Sub(${\Psi}$) is not satisfied on a
semibicharacteristic ${\Gamma}$ of $p_s$, i.e., $\im ap_s$ changes
sign from $-$ to $+$ on the positive flow-out of $H_{\re
a p_s} \ne 0$ for some  $0 \ne a \in C^\infty$. 
Now if the sign change is {\em not} of finite order, 
we shall also need an extra condition on the rate of
vanishing of both the Hessian of the principal symbol and the complex
part of the gradient of the subprincipal 
symbol on the subprincipal semibicharacteristic. 
Then, we shall assume that there exists $C > 0$,
${\varepsilon} > 0$ and $0 \ne a \in C^\infty$ so that $d\re
ap_{s}\restr{T\st} \ne 0$ and  
\begin{equation}\label{cond2}
\mnorm{\hess p} + |d p_{s}\wedge d \ol p_{s}|  \le C |p_{s}|^{\varepsilon} 
\qquad \text{when $\re a  p_{s}= 0$ on $\st$}
\end{equation}
near ${\Gamma}$. Since~\eqref{cond2}
also holds for smaller~${\varepsilon}$ and larger~$C$, it is no
restriction to assume ${\varepsilon} \le 1$.
The motivation for~\eqref{cond2} is to prevent the
transport equation~\eqref{transexp} to disperse 
the support of the solution before the sign change of the imaginary
part of the subprincipal symbol localizes
it, see Remark~\ref{condrem}. 
We also find that $\nabla
 p_{s}$ is proportional to a real vector when
$ p_{s} = 0$ since then $d p_{s}\wedge d \ol p_{s} = 0$.
Then $ a $ is well-defined up to real factors.

\begin{rem}\label{condrem0} 
Condition~\eqref{cond2} is
invariant under multiplication of $P$ with elliptic pseudodifferential
operators and symplectic 
changes of coordinates. If~\eqref{cond1} also holds, then we obtain
that 
\begin{equation}\label{cond2o}
 \mnorm{d\hess p\restr{TL} }  \le C_1 |p_{s}|^{\varepsilon/2} 
\end{equation}
for any leaf $L$ of $\st$ when $\re a  p_{s}= 0$ near ${\Gamma}$.
\end{rem}

In fact, multiplication with an elliptic pseudodifferential operator
with principal symbol $c$ changes the principal 
symbol into $cp$, the Hessian of the principal symbol into $c\hess
p$ and the subprincipal symbol into  
$$
c  p_{s} + \frac{i}{2}H_p c \qquad \text{at $\st$} 
$$
where the last term vanishes at~$\st$ and contains the factor $\hess
p$, modulo terms vanishing of second order at~$\st$. Now we have that
\begin{equation*}
 |d cp_{s}\wedge d \ol {cp}_{s}| \le |c|^2|d p_{s}\wedge d \ol
 p_{s}| + C|p_{s}|
\end{equation*}
Thus we find that ~\eqref{cond2} holds with
$p$ replaced by $cp$, $p_{s}$ replaced by $cp_{s}$ and $a$
replaced with $a/c$. If~\eqref{cond1} also holds and 
we choose coordinates so that $\st =
\set{{\xi}'=0}$, then we obtain from Lemma~\ref{constsublem} that
$|p_s(x',x'',0,{\xi}'')| \cong |p_s(x'_0,x'',0,{\xi}'')|$ when $|x' -
x'_0| \le c$. Thus ~\eqref{cond2} gives
\begin{equation*}
 \mnorm{\hess p(x',x'',0,{\xi}'')} \le C_2
 |p_{s}(x'_0,x'',0,{\xi}'')|^{\varepsilon}  \qquad\text{ when $|x' -
x'_0| \le c$}
\end{equation*}
To show~\eqref{cond2o} it suffices to consider an element
$b_{jk}(x',x'',0,{\xi}'')$ of
$\hess p$. Clearly $|b_{jk}| \le  \mnorm{\hess p}$ so by adding
$C_2|p_s(x_0,x'',0,{\xi}'')|^{\varepsilon}$, we obtain that
$$
0 \le b_{jk}(x',x'',0,{\xi}'') \le 
2C_2|p_s(x_0,x'',0,{\xi}'')|^{\varepsilon} \qquad\text{ when $|x' -
x'_0| \le c$}
$$ 
Then we find that
$$
| \partial_{x'}b_{jk}(x_0,x'',0,{\xi}'')| \le C\sqrt{
  b_{jk}(x_0,x'',0,{\xi}'')} \le C' |p_s(x_0,x'',0,{\xi}'')|^{\varepsilon/2}
$$ 
by~\cite[Lemma~7.7.2]{ho:yellow}.

We shall study the microlocal solvability of the operator, which is
given by the following definition. Recall that $H^{loc}_{(s)}(X)$ is
the set of distributions that are locally in the $L^2$ Sobolev space
$H_{(s)}(X)$. 

\begin{defn}\label{microsolv}
If $K \subset S^*X$ is a compact set, then we say that $P$ is
microlocally solvable at $K$ if there exists an integer $N$ so that
for every $f \in H^{loc}_{(N)}(X)$ there exists $u \in \Cal D'(X)$ such
that $K \bigcap \wf(Pu-f) = \emptyset$. 
\end{defn}

Observe that solvability at a compact set $M \subset X$ is equivalent
to solvability at $S^*X\restr M$ by~\cite[Theorem 26.4.2]{ho:yellow},
and that solvability at a set implies solvability at a subset. Also,
by~\cite[Proposition 26.4.4]{ho:yellow} the microlocal solvability is
invariant under conjugation by elliptic Fourier integral operators and
multiplication by elliptic pseudodifferential operators.
We can now state the main result of the paper.

\begin{thm}\label{mainthm}
Assume that $P \in {\Psi}^m_{cl}(X)$ has principal symbol that
vanishes of at least second order at a non-radial involutive manifold, is of
subprincipal type,  does not satisfy
condition Sub(${\Psi}$) on the subprincipal semibicharacteristic
${\Gamma} \subset T^*X$, and satisfies~\eqref{cond1} near~$ \Gamma $.
In the case the sign change in Sub(${\Psi}$) 
is of infinite order we also assume condition~\eqref{cond2} near~$ \Gamma $. 
Then $P$ is not locally solvable at\/~${\Gamma}$.
\end{thm}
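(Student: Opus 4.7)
The plan is to prove non-solvability by constructing approximate null-solutions that contradict the a priori estimate equivalent to microlocal solvability. By the standard duality argument, if $P$ were microlocally solvable at $\Gamma$ then there would exist integers $N$, $\mu$ and a constant $C$ such that
\begin{equation*}
\|v\|_{(-N)} \le C\bigl(\|P^* v\|_{(\mu)} + \|v\|_{(-N-1)}\bigr)
\end{equation*}
for every $v \in C_c^\infty$ microlocally concentrated near $\Gamma$. The plan is therefore to violate this estimate by producing a family of quasimodes $\{u_\lambda\}_{\lambda \to \infty}$ for $P^*$ that are microlocally supported near $\Gamma$, with $\|u_\lambda\|_{(-N)}$ bounded below by a positive power of $\lambda$ while $\|P^* u_\lambda\|_{(\mu)} = O(\lambda^{-M})$ for every $M$.

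First I would reduce to a normal form. Since condition Sub($\Psi$), condition~\eqref{cond1}, and condition~\eqref{cond2} are all invariant under multiplication of $P$ by elliptic pseudodifferential operators and under conjugation by elliptic Fourier integral operators, I would arrange that $\st = \{\xi' = 0\}$ as in~\eqref{sigma2}, and by Lemma~\ref{constsublem} that $p_s = q(x'',\xi'')$ is leaf-constant on $\st$. Passing to $P^*$ reverses the sign change, and by replacing $p_s$ with $a p_s$ we may further assume that $\im p_s$ changes sign from $+$ to $-$ along the positive orbit of $H_{\re p_s}$ on $T^\sigma\st$. Taylor expansion at $\st$ gives $p(x,\xi) = \langle B(x,\xi)\xi',\xi'\rangle$ near $\st$, where $B$ is essentially $\hess p$; condition~\eqref{cond2} then provides the key smallness $\mnorm{\hess p} \le C|p_s|^\varepsilon$ on the characteristic set of $\re p_s$ near $\Gamma$.

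Next I would adapt the quasimode construction used for principal-type operators violating $(\Psi)$, now applied to $p_s$ viewed as a symbol on the symplectic quotient $T^\sigma\st$. The ansatz is
\begin{equation*}
u_\lambda(x) = \lambda^\kappa\, \chi(x'')\, \psi_\lambda(x')\, e^{i\lambda \phi(x'', \lambda)} \sum_{j \ge 0} \lambda^{-j} a_j(x'', \lambda),
\end{equation*}
where $\phi$ is a complex phase with $\im \phi \ge 0$ chosen on a short arc of the subprincipal bicharacteristic through the sign-change point $w_0$, $\chi$ cuts off near the projection of $w_0$, and $\psi_\lambda$ is a cutoff in the leaf variable $x'$ whose scale is chosen fine enough to absorb the dispersion caused by the second-order term $\langle B\xi',\xi'\rangle$. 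The amplitudes $a_j$ solve transport equations along the subprincipal Hamilton field $H_{\re p_s}$ on $T^\sigma\st$, which is a genuine nowhere-radial vector field there because $P$ is of subprincipal type. The Nirenberg--Treves sign change forces $|a_0|$ to concentrate near $w_0$, yielding the required lower bound for $\|u_\lambda\|_{(-N)}$.

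The main obstacle is controlling $P^* u_\lambda$ in the infinite-order sign-change case. The subprincipal-level errors are killed by the $a_j$ exactly as in the principal-type theory; the genuinely new contribution comes from applying the second-order term $\langle B\xi',\xi'\rangle$ and its first-order corrections to the leaf cutoff $\psi_\lambda$, which produces factors of $\hess p$ and of $d\hess p\restr{TL}$ acting on $\psi_\lambda$. In the finite-order sign-change case standard polynomial-scale phases keep these contributions automatically small and~\eqref{cond2} is not needed. In the infinite-order case the phase and amplitude must localize $u_\lambda$ at a scale faster than any power of $\lambda^{-1}$, and the hard point is to beat the resulting dispersion of the principal part by the localization from $e^{-\lambda \im \phi}$; this is done via~\eqref{cond2} together with its consequence~\eqref{cond2o} from Remark~\ref{condrem0}, which give $\mnorm{\hess p} \le C|p_s|^\varepsilon$ and $\mnorm{d\hess p\restr{TL}} \le C|p_s|^{\varepsilon/2}$ on the support of $u_\lambda$. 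A careful bookkeeping in the symbolic calculus then yields $\|P^* u_\lambda\|_{(\mu)} = O(\lambda^{-M})$ for every $M$, contradicting the a priori estimate and proving non-solvability.
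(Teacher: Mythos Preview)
Your overall strategy—violate the solvability estimate of Remark~\ref{solvrem} with a family of quasimodes for $P^*$, after reducing to the normal form of Proposition~\ref{prepprop}—matches the paper's. But the quasimode construction you sketch has two genuine gaps.

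First, the transport step is not a first-order equation along $H_{\re p_s}$. In the normal form the expansion~\eqref{transexp} reads
\[
D_t\phi + \tfrac12\partial_\eta^2 B\,D_y^2\phi + A\cdot D_y\phi + i\partial_\xi f\,D_x\phi + R_0\phi + \dots,
\]
so the leading ``transport'' operator contains the second-order term $\partial_\eta^2 B\,D_y^2$ coming from $\hess p$. Your ansatz, with amplitudes $a_j(x'')$ independent of the leaf variable and a separate cutoff $\psi_\lambda(x')$, cannot absorb this term: acting on $\psi_\lambda$ it produces a contribution of size $\lambda^2\psi_\lambda''$-scale, and there is no eikonal equation in the $x'$ direction to cancel it. The paper instead lets the amplitudes depend on $y$ and treats the $D_y^2$ and $D_y$ terms as perturbations, which forces the anisotropic rescaling $\phi_k(t,x,y)=\varphi_k(\varrho^3 t,\varrho^2 x,\varrho y)$ with $\varrho=\lambda^{1/N}$ and an expansion in powers of $\varrho^{-1}$ rather than $\lambda^{-1}$ (Proposition~\ref{transprop}).

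Second, and more seriously, your handling of the infinite-order case is where the real difficulty lies, and ``careful bookkeeping'' does not cover it. The assertion that one must localize ``at a scale faster than any power of $\lambda^{-1}$'' is incorrect; the paper localizes at the polynomial scales above. The actual mechanism is Lemma~\ref{intrem}: on a maximal interval where the time-integrals of $|A_j|$ (hence the accumulated dispersion from the Hessian and from $\partial p_s\wedge\partial\ol p_s$) stay below $C\varrho^{-3}$, one can solve the transport hierarchy with uniformly bounded $\varphi_k$; at the endpoints of this interval, condition~\eqref{cond2a} (the form of~\eqref{cond2} after normalization) combined with Lemma~\ref{intlem} forces $\lambda F(t)\lesssim -\lambda^{\varepsilon}$, so the phase factor $e^{\lambda F}$ kills all cutoff errors. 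This balance between the $|p_s|^\varepsilon$ bound on $\hess p$ and the rate at which $\int f\,dt$ becomes negative is the crux of the infinite-order argument, and it is absent from your outline.
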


\begin{exe}\label{MU}
Let
\begin{equation}\label{MUeq}
 P = D_1D_2 + B(x,D_x)
\end{equation}
with $B \in {\Psi}^1_{cl}$, then ${\sigma}(B)$ is the subprincipal
symbol on ${\Sigma}_2 = \set{{\xi}_1 = {\xi}_2 = 0}$. Mendoza and
Uhlmann  proved in~\cite{MU2} that $P$ was not solvable if
$\im {\sigma}(B)$ changed sign as $x_1$ or $x_2$ increases on
${\Sigma}_2$, and they proved in~\cite{MU1} that $P$ was 
solvable if\/ $\im {\sigma}(B) \ne 0$ on ${\Sigma}_2$.
From this it is
natural to conjecture that the condition for solvability of $P$
is that $\im {\sigma}(B)$
does not change sign on the leaves of ${\Sigma}$, which are foliated by
$\partial_{x_1}$ and $\partial_{x_2}$. But the following is a counterexample
to that conjecture. Let
\begin{equation}
 P = D_1D_2 +  D_t + if(t,x,D_x) 
\end{equation}
with real and homogeneous $f(t,x,{\xi})\in S^1_{hom}$ satisfying
$\partial_{x_j}f = \Cal O(|f|)$ for $j=1$, $2$.
This operator is of subprincipal type and satisfies~\eqref{cond1}.
Then Theorem~\ref{mainthm} gives that $P$ is not solvable if $t \mapsto
f(t,x,{\xi})$ changes sign of finite order from $-$ to $+$, but
observe that $f$ has constant sign on  
the leaves of\/ ${\Sigma}_2$ by Remark~\ref{realvalrem}. Thus the
solvability of the operator $P$ in~\eqref{MUeq}
also depends on the real part of the subprincipal symbol at~$\st$.
In fact,  with the above conditions one can prove that $ D_1D_2 + if(t,x,D_x)  $ is solvable.
\end{exe}

\begin{exe}\label{LNS}
The linearized Navier-Stokes equation 
\begin{equation}
\partial_t u + \sum_j a_j(t,x) \partial_{x_j}u + \Delta_x u = f   \qquad a_j(x) \in C^\infty
\end{equation}
is of subprincipal type. The symbol is
\begin{equation}
i\tau + i\sum_j a_j(t,x)\xi_j -  | \xi |^2
\end{equation}
so  the subprincipal symbol is proportional to a real symbol on~$ \st = \set{   \xi = 0} $. Thus condition Sub(${\Psi}$) 
is satisfied.
\end{exe}

Now let $S^*M
\subset T^*M$ be the cosphere bundle where $|{\xi}| = 1$, and let
$\mn{u}_{(k)}$ be the $L^2$ Sobolev norm of order $k$, $u \in C_0^\infty$.
In the following, $P^*$ will be the $L^2$ adjoint of $P$.
To prove Theorem~\ref{mainthm} we shall use the following result.

\begin{rem}\label{solvrem}
If $P$ is microlocally solvable at ${\Gamma}\subset S^*\br^n$,
then Lemma 26.4.5 in~\cite{ho:yellow} gives that for any $Y \Subset
\br^n$ such that ${\Gamma} \subset S^*Y$ there exists an integer ${\nu}$
and a pseudodifferential operator $A$ so that
$\wf(A) \cap {\Gamma} = \emptyset$ and
\begin{equation}\label{solvest}
 \mn {u}_{(-N)} \le C(\mn{P^*{u}}_{({\nu})} + \mn {u}_{(-N-n)} +
 \mn{Au}_{(0)}) \qquad u \in C_0^\infty(Y)
\end{equation}
where~$N$ is given by Definition~\ref{microsolv}.
\end{rem}

We shall prove Theorem~\ref{mainthm} in Section~\ref{pfsect} by
constructing localized  
approximate solutions to $P^*u \cong 0$ and
use~\eqref{solvest} to show that $P$ is 
not microlocally solvable at~ ${\Gamma}$.
We shall first find a normal form for the adjoint operator.

\section{The normal form} \label{normform}

Assume that $P^*$ has the symbol expansion $p_m +
p_{m-1} + \dots$ where $p_j \in S^j_{hom}$ is homogeneous of degree~$j$. By
multiplying  $P^*$ with an elliptic pseudodifferential operator, we may
assume that $m=2$. 
Choose local symplectic coordinates $(t,x,y,{\tau},{\xi},{\eta})$ so that $\st =
\set{{\eta} = 0}$, which is foliated by leaves spanned by $\partial_y$.
Since $p_2$ vanishes of at least second order at $\st$ we find that 
\begin{equation*}
 p_2(t,x,y,{\tau},{\xi},{\eta}) = \sum_{jk}
 B_{jk}(t,x,y,{\tau},{\xi},{\eta}) {\eta}_j{\eta}_k
\end{equation*}
where $B_{jk}$ is homogeneous of degree $0$, $\forall\, jk$.

The differential inequality~\eqref{cond1} in these coordinates means that
$|\partial_{y}p_1| \le C|p_1|$ when ${\eta}= 0$, which by
Lemma~\ref{constsublem} gives that 
\begin{equation*} 
 p_1(t,x,y,{\tau},{\xi},0) =
 q(t,x,y,{\tau},{\xi})r_1(t,x,{\tau},{\xi})
\end{equation*}
near ${\Gamma}$, where $q$ is a non-vanishing smooth homogeneous function.
By multiplying with a pseudodifferential
operators with principal symbol equal to $q^{-1}$ on~$\st$ we may
assume that $q \equiv 1$ and 
that $p_1$ is constant on the leaves of $\st$ near ${\Gamma}$.
The Hamilton vector field of $p_1$ is then tangent to $\st$ by~\eqref{hsub}.

We have assumed that $P$ does not satisfy condition Sub(${\Psi}$) on a
semibicharacteristic ${\Gamma}$ of $p_1$ on $ \st $. Since we are now considering
the adjoint $P^*$ this means that  $\im ap_1$ changes
sign from $+$ to $-$ on the flow-out~${\Gamma}$ of $H_{\re
a p_1}$ on $\re a p_1^{-1}(0)$ for some  $0 \ne a \in C^\infty$. By the
invariance of condition Sub(${\Psi}$) given by  by~\cite[Lemma
26.4.10]{ho:yellow}, it is no restriction to assume
that $a$ is homogeneous and constant in ~$y$. By multiplication with
an elliptic pseudodifferential operator having principal symbol
$a^{-1}$ we may assume that $a \equiv 1$. Since $\im p_1$ changes
sign on ${\Gamma}$ there is a maximal semibicharacteristic ${\Gamma}' \subset
{\Gamma}$ on which $\im p_1 = 0$. Here
${\Gamma}'$ could be a point, which is always the case if the sign
change is of finite order.

Since $P$ is of subprincipal type we find that
$\partial_{t,x,{\tau},{\xi}}\re p_1 \ne 0$ on ${\Gamma}'$ by 
~\eqref{subprinc} so ${\Gamma}'$ is transversal to the leaves of
$\st$. Since $\im p_1\restr{\Gamma}$ has opposite signs near the  
boundary of ${\Gamma}'$, we may shrink ${\Gamma}$ so that
it is not a closed curve.
Since $ H_{\re p_1} $ is tangent to $ \st $ we can complete ${\tau}= \re p_1$ to a symplectic coordinate
system in a convex neighborhood of ${\Gamma}'$ so that ${\eta}= 0$ on
$\st$. In fact, this is obtained by solving the equation $H_{\tau}
{\eta} = 0$ with initial value on a 
submanifold transversal to $H_{\tau}$.
The change of variables can be then done by conjugation with suitable elliptic
Fourier integral operators. 

Now, by 
using Malgrange's preparation theorem in a neighborhood of ${\Gamma}'$
in $\st$ we find that
\begin{equation*}
 p_1(t,x,y,{\tau},{\xi},0) = q(t,x,{\tau},{\xi})({\tau}
 + r(t,x,{\xi})) \qquad q \ne 0 
\end{equation*}
near ${\Gamma}$, since $p_1$ is constant on the leaves of $\st$.
In fact, on ${\Gamma}'$ we have that $p_1 = 0$ and $ dp_1 \ne 0 $, so the division can
be done locally and by a partition of unity 
globally near ${\Gamma}$ after possibly shrinking ${\Gamma}$. 
Then by using Taylor's formula on $p_1$ we find since $q \ne 0$ that 
\begin{equation}
  p_1(t,x,y,{\tau},{\xi},{\eta}) = q(t,x,{\tau},{\xi})\left({\tau} +
  r(t,x,{\xi}) + A(t,x,y,{\tau},{\xi},{\eta})\cdot {\eta}\right)
\end{equation}
By multiplying $P$ with an elliptic pseudodifferential operator, we
may again assume $q \equiv 1$. 
Since $p_2$ vanishes of second order at~$\st$, this only changes $A$
with terms which has $\hess p_2$ as a factor and terms that vanish at~$\st$. 

We can write $r = r_1 + ir_2$ and $A = A_1 + i A_2$ with real valued
$r_j$ and $A_j$, $j= 1$, $2$.
Now we may complete 
\begin{equation*}
\re p_1 = {\tau}+ r_1(t,x,{\xi}) + A_1(t,x,y,{\tau},{\xi},{\eta})\cdot {\eta} 
\end{equation*}
to a symplectic
coordinate system in a convex neighborhood of ${\Gamma}'$. Since
$H_{\re p_1} \in T\st$ at $ \st $ we may keep $\st = \set{{\eta} = 0}$,
which preserves the leaves of $\st$ on which $p_1$ is constant. Thus, we
find that
\begin{equation}\label{subnormform}
 p_1 = {\tau}+ if(t,x,{\xi}) + iA(t,x,y,{\tau},{\xi},{\eta})\cdot {\eta} 
\end{equation}
where $f = r_2$ and $A = A_2$ are real valued. 
We also find that
\begin{equation}\label{defgamma}
 {\Gamma} = \set{(t,x_0,y_0, 0, {\xi}_0,0)} \qquad t \in I
\end{equation}
where $I$ is an interval in~$\br$. 
The symplectic change of coordinates can be made by conjugation with
elliptic Fourier integral operators, which only changes $A$ with terms
having $\hess p_2$ as a factor and terms that vanish at~$\st$. Observe
that $A$ need not be real valued after these changes.

We have assumed
that condition Sub(${\Psi}$) is {\em not}
satisfied for $P$ on the subprincipal semibicharacteristic ${\Gamma}$. Thus 
the imaginary part of the subprincipal symbol of~$P^*$ on~$\st$
\begin{equation}
 t \mapsto  f(t,x_0,{\xi}_0)
\end{equation}
changes sign from $+$ to $-$ as $t$ increases on $I
\subset \br$. Similarly, we have that $f = 0$ on ${\Gamma}'$ where
${\Gamma}'$ is given by~\eqref{defgamma} with $I$ replaced by $I' \subset I$.
By reducing to {\em
minimal bicharacteristics} on which $ t \mapsto
f(t,x,{\xi})$ changes sign as in~ \cite[p. 75]{ho:nec}, we may assume that $f$ 
vanishes of infinite order on a bicharacteristic ${\Gamma}'$
arbitrarily close to the original bicharacteristic,
if ${\Gamma}'$ is not a point (see~\cite[Section~2]{Witt} for a
more refined analysis). If ${\Gamma}'$ is not a point then it is a
{\em one-dimensional bicharacteristic} by 
\cite[Definition~3.5]{ho:nec}, which means that the
Hamilton vector field on ${\Gamma}'$ is proportional to a real vector.

In fact, if $ f(a,x_0,{\xi}_0) > 0 > f(b,x_0,{\xi}_0)$ for some $a <
b$, then we can define
\begin{equation*}
 L(x,{\xi}) = \inf \set{t-s:\ a < s < t < b\quad \text{such that}\quad
     f(s,x,{\xi}) > 0 > f(t,x,{\xi})}
\end{equation*}
when $(x,{\xi})$ is close to $(x_0,{\xi}_0)$, and we put
$L_0 = \liminf_{(x,{\xi}) \to (x_0,{\xi}_0)} L(x,{\xi})$. Then for
every ${\varepsilon} > 0$ there exists an open neighborhood ~$V_{\varepsilon}
$ of $(x_0,{\xi}_0)$ such that the diameter of~ $V_{\varepsilon}$ is
less than~${\varepsilon}$ and $L(x,{\xi}) > L_0 - {\varepsilon}/2$
when $(x,{\xi}) \in V_{\varepsilon}$. By definition, there exists
$(x_{\varepsilon}, 
{\xi}_{\varepsilon}) \in V_{\varepsilon}$ and $a < s_{\varepsilon} <
t_{\varepsilon} < b$ so that $t_{\varepsilon} - s_{\varepsilon} < L_0
+ {\varepsilon}/2$ and
$f(s_{\varepsilon},x_{\varepsilon},{\xi}_{\varepsilon}) > 0 > 
f(t_{\varepsilon},x_{\varepsilon},{\xi}_{\varepsilon})$. Then it is
easy to see that  
\begin{equation}\label{mincharcond}
 \partial_x^{\alpha}\partial_{\xi}^{\beta} f(t, x_{\varepsilon},
 {\xi}_{\varepsilon}) = 0 \qquad \forall\,{\alpha}\,{\beta} \qquad
 \text{when} \quad s_{\varepsilon}+ 
 {\varepsilon} < t  < t_{\varepsilon} - {\varepsilon} 
\end{equation}
since else we would have a sign change in a smaller interval than $L_0
- {\varepsilon}/2$ in~$V_{\varepsilon}$. We may choose a sequence
${\varepsilon}_j \to 0$ so that $s_{{\varepsilon}_j} \to s_0$ and
$t_{{\varepsilon}_j} \to t_0$, then $L_0 = t_0 - s_0$
and~\eqref{mincharcond} holds at~$(x_0,{\xi}_0)$ for $s_0 < t < t_0$.

We also obtain the following condition from~\eqref{cond2}.

\begin{rem}\label{condrem}
If the sign change of $t \mapsto f(t,x,{\xi})$ is of infinite order on
${\Gamma}$, then we find from assumption~\eqref{cond2} that
\begin{equation}\label{cond2a}
\mn{\set{{B_{jk}}}_{jk}} + | A| + |df| \ls
|f|^{\varepsilon} \qquad \text{near ${\Gamma}$ on $\st$}
\end{equation}
for some ${\varepsilon} > 0$. 
Here $a \ls b$ (and $b \gs a$) means that $a \le Cb$ for some $ C > 0$.
\end{rem}

In fact, terms having $\hess p_2\restr \st = \set{B_{jk}}_{jk}$ as a factor
can be estimated by~\eqref{cond2}, so we may assume
that~\eqref{subnormform} holds with real~$A$. The subprincipal symbol
is equal to $p_s = p_1 + i\sum_{jk}\partial_{y_j}B_{jk}{\eta}_k$
modulo terms that are $\Cal O(|{\eta}|^2)$ so $p_s = p_1$ on~$\st$. 
By Remark~\ref{condrem0} and~\eqref{cond1} we can 
estimate the terms $\partial_{y_j}B_{jk}d{\eta}_k$ in $dp_s$ by replacing~
${\varepsilon}$ with ~${\varepsilon}/2$ in~\eqref{cond2}, so we may
replace $p_s$ by ~$p_1$ in the estimate. 
Let $0 \ne a = a_1 + ia_2$ with real valued $a_j$ in~\eqref{cond2}, so
that $d \re ap_1\restr{T\st} \ne 0$. We have $dp_1 = d{\tau} + i(df +
A d{\eta})$ on~$\st$ so  
\begin{equation*}
 |d p_{1}\wedge d \ol p_{1}| \cong |df| + |A| \qquad \text{on $\st$}
\end{equation*}
Thus we find from~\eqref{cond2} that $|df| + |A| = 0$ on ${\Gamma}'$.
Since $d \re ap_1\restr{T\st} \ne 0$ we find that
$a_1 \ne  0$ on ${\Gamma}'$. On $\st$ we have that $\re a p_1 = 
a_1{\tau} - a_2 f = 0$ when ${\tau}= a_2 f/a_1$. We obtain that
$$
\im ap_1 = a_2 {\tau} + a_1 f = |a|^2 f/a_1 \qquad \text{when $\re a
  p_1 = 0$ on $\st$ near ${\Gamma}'$}
$$
which gives~\eqref{cond2a} from~\eqref{cond2}.

We obtain the following normal form for these operators of subprincipal type: 
\begin{equation}
 P^* = D_t + F(t,x,y,D_t,D_x,D_y)
\end{equation}
where $F \sim F_2 + F_1 + \dots$ with homogeneous $F_j \in
C^\infty(\br,S^j_{hom})$. Here $F_2$ vanishes of at
least second order on ${\Sigma}_2 = \set{{\eta}=0}$, so we find by
Taylor's formula that 
\begin{equation}
 F_2(t,x,y,{\tau},{\xi},{\eta}) = B(t,x,y,{\tau},{\xi},{\eta})
 = \sum_{jk}^{}B_{jk}(t,x,y,{\tau},{\xi},{\eta}){\eta}_j{\eta}_k
\end{equation}
with homogeneous $B_{jk}$, then $\{B_{jk}\}_{jk}\restr \st = \hess
F_2(t,x,y,{\tau},{\xi},0)$. Also we have that $F_1$ vanishes on the
semibicharacteristic ${\Gamma}'$ and 
\begin{equation}
F_1(t,x,y,{\tau},{\xi},{\eta}) = if(t,x,{\xi}) +
A(t,x,y,{\tau},{\xi},{\eta})\cdot {\eta} 
\end{equation}
Here $f$ is real and homogeneous of degree 1 and $A\restr \st =
\partial_{\eta} F_1\restr \st$. We have that 
the principal symbol ${\sigma}(P^*) = F_2$, and the subprincipal  
symbol ${\sigma}_{sub}(P^*) = {\tau} + if$ on $\st$. 
Thus we obtain the following result.

\begin{prop}\label{prepprop}
Assume that $P$ satisfies the conditions in
Theorem~\ref{mainthm}. Then by conjugation with elliptic Fourier
integral operators and multiplication with an elliptic
pseudodifferential operator we may assume that
\begin{equation}
 P^* = D_t + F(t,x,y,D_t,D_x,D_y)
\end{equation}
microlocally near ${\Gamma} = \set{(t,x_0,y_0,0,{\xi}_0,0):\ t \in I} \subset \st$
where $S^2_{cl} \ni F \cong F_2 + F_1 + \dots$ with $F_j \in
S^j_{hom}$ is homogeneous of degree $j$ and
$$
F_2 =
\sum_{jk}^{}B_{jk}(t,x,y,{\tau},{\xi},{\eta}){\eta}_j{\eta}_k \in
S^2_{hom}
$$
vanishes of second order on $\st$. We may also
assume that
$$
F_1(t,x,y,{\tau},{\xi},{\eta}) =  if(t,x,{\xi}) +
A(t,x,y,{\tau},{\xi},{\eta}) \cdot {\eta}  
$$
is homogeneous of degree 1 and $f$ is real valued such that $t
\mapsto f(t,x_0,{\xi}_0)$ changes sign from $+$ to $-$ as $t$
increases on $I \subset \br$. 
If $f(t,x_0,{\xi}_0) = 0$ on a subinterval $I' \subseteq I$ such that
$|I'| \ne 0$, then we may assume that $\partial_{t}^{k} \partial_{x}^{{\alpha}}
\partial_{{\xi}}^{{\beta}}f(t,x_0,{\xi}_0) = 0$, $\forall\, k\,
{\alpha}\, {\beta}$, for $t \in I'$.
If the sign change of $f$ is of infinite order 
then~\eqref{cond2a} is satisfied near~${\Gamma}$.
\end{prop}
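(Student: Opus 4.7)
The argument is a sequence of reductions, much of which the preceding discussion has already carried out; my plan is to assemble those pieces into a clean normalization of the adjoint.

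First, multiply $P^*$ by an elliptic pseudodifferential operator to reduce to $m = 2$, and conjugate by an elliptic Fourier integral operator to obtain homogeneous symplectic coordinates $(t,x,y,\tau,\xi,\eta)$ in which $\st = \set{\eta = 0}$ with leaves spanned by $\partial_y$. Taylor's formula at $\st$ yields $p_2 = \sum_{jk} B_{jk}\eta_j \eta_k$, so $\hess p_2\restr{\st} = \{B_{jk}\}$. By condition~\eqref{cond1} and Lemma~\ref{constsublem}, $p_1\restr{\st} = q(t,x,y,\tau,\xi)\,r_1(t,x,\tau,\xi)$ with $q$ non-vanishing; multiplying by an elliptic pseudodifferential operator with principal symbol $q^{-1}$ on $\st$ makes $p_1$ constant on the leaves, and then~\eqref{hsub} gives $H_{p_1}\subseteq T\st$ at $\st$. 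This modification only changes $p_2$ by terms still vanishing to second order on $\st$, preserving the Taylor form above.

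Next, invoke the failure of Sub($\Psi$): there is $0 \ne a \in C^\infty$ such that $\im ap_1$ changes sign from $+$ to $-$ along ${\Gamma}$. The invariance of Sub($\Psi$) from \cite[Lemma~26.4.10]{ho:yellow} allows us to take $a$ homogeneous and constant in $y$, and a further multiplication by an elliptic pseudodifferential operator with principal symbol $a^{-1}$ reduces to $a \equiv 1$. Let ${\Gamma}' \subseteq {\Gamma}$ be the maximal sub-semibicharacteristic on which $\im p_1 = 0$. Since $P$ is of subprincipal type, \eqref{subprinc} gives $d\re p_1 \ne 0$ on ${\Gamma}'$ with ${\Gamma}'$ transversal to the leaves; as $H_{\re p_1}$ is tangent to $\st$, an elliptic Fourier integral conjugation completes $\tau = \re p_1$ to a symplectic coordinate system while keeping $\st = \set{\eta = 0}$, by solving $H_\tau\eta = 0$ from transversal initial data. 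Malgrange preparation then gives $p_1\restr{\st} = q(\tau + r(t,x,\xi))$ with $q \ne 0$, and another multiplication normalizes $q \equiv 1$. Taylor expansion in $\eta$ produces $p_1 = \tau + r + A\cdot\eta$, and a final symplectic change of coordinates completing $\re p_1 = \tau + \re r + \re A\cdot\eta$ to symplectic coordinates (still with $\st = \set{\eta = 0}$ and preserving the leaves) yields the claimed form $p_1 = \tau + if + iA\cdot\eta$ with $f$ real. The sign-change hypothesis transfers to $t \mapsto f(t,x_0,\xi_0)$, and if ${\Gamma}'$ has positive length the standard reduction to a minimal bicharacteristic as in~\cite[p.~75]{ho:nec} provides $I' \subseteq I$ on which $f(t,x_0,\xi_0)$ vanishes to infinite order.

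To get~\eqref{cond2a} in the infinite-order case, observe that on $\st$ we have $dp_1 = d\tau + i(df + A\,d\eta)$, hence $|dp_1 \wedge d\ol p_1| \cong |df| + |A|$; the Hessian term $\mnorm{\set{B_{jk}}}$ is controlled directly by~\eqref{cond2}, and the extra contribution $\partial_{y_j}B_{jk}\,d\eta_k$ to $dp_s - dp_1$ is absorbed by Remark~\ref{condrem0} at the cost of replacing $\varepsilon$ by $\varepsilon/2$. Writing $a = a_1 + ia_2$ with $a_1 \ne 0$ on ${\Gamma}'$ (since $d\re ap_1\restr{T\st}\ne 0$), on the set $\set{\re ap_1 = 0}$ one computes $\im ap_1 = |a|^2 f/a_1$, which converts the bound in terms of $|p_s|^\varepsilon$ into one in terms of $|f|^\varepsilon$. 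The main obstacle throughout is bookkeeping: each normalization must preserve $\st = \set{\eta = 0}$, the leaf foliation, the second-order vanishing of the principal symbol there, and the factorization of the subprincipal symbol; one must check that every pseudodifferential multiplication and Fourier integral conjugation modifies $A$ only by terms containing $\hess p_2$ as a factor or vanishing on $\st$, so the structural form of the conclusion persists even though $A$ is no longer necessarily real.
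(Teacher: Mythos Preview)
Your proposal is correct and follows essentially the same route as the paper: the argument is precisely the sequence of reductions carried out in Section~\ref{normform}, from the initial normalization $m=2$ and $\st=\{\eta=0\}$, through the leaf-constancy of $p_1$ via Lemma~\ref{constsublem}, the two symplectic completions of $\re p_1$ (before and after Malgrange preparation), and finally the derivation of~\eqref{cond2a} via Remark~\ref{condrem}. Your bookkeeping remark at the end accurately captures the paper's repeated observation that each multiplication and conjugation perturbs $A$ only by terms carrying $\hess p_2$ as a factor or vanishing on $\st$, and that $A$ need not remain real after the final coordinate change.
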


For the proof of Theorem~\ref{mainthm} we shall modify the Moyer-H\" ormander
construction of approximate solutions of the type  
\begin{equation}
 u_{\lambda}(t,x,y) = e^{i{\lambda}{\omega}(t,x,y)}
 \sum_{j\ge 0} {\phi}_j(t,x,y) {\lambda}^{-j/N} \qquad {\lambda}\ge 1
\end{equation}
with $N$ to be determined later. 
Here the phase function ${\omega}(t,x)$ will be complex valued, but
$\im {\omega} \ge 0$ and $\partial \re {\omega} \ne 0$ when
$\im{\omega} = 0$.  
Letting $z = (t,x,y)$ we therefore have the formal expansion
\begin{equation}\label{trevesexp}
  p(z,D)  (\exp(i{\lambda}{\omega}){\phi})
  \sim \exp(i{\lambda}{\omega}) \sum_{{\alpha}}
   \partial_{{\zeta}}^{\alpha }
  p(z,{\lambda} \partial_z{\omega}(z))\Cal
  R_{\alpha}({\omega},{\lambda},D){\phi}(z)/{\alpha}! 
\end{equation}
where $\Cal R_{\alpha}({\omega},{\lambda},D){\varphi}(z) =
D_w^{\alpha}(\exp(i{\lambda} \wt
{\omega}(z,w)){\varphi}(w))\restr{w=z}$ 
and 
$$
\wt {\omega}(z,w) = {\omega}(w) - {\omega}(z) +
(z-w)\partial {\omega}(z)
$$
Observe that the values of the symbol are given by an
almost analytic extension, see Theorem  3.1 in Chapter VI and Chapter
X:4 in~\cite{T2}. This gives
\begin{multline}\label{exp0}
  e^{-i{\lambda}{\omega}}P^* e^{i{\lambda}{\omega}}{\phi} = \\
\big({\lambda}\partial_t{\omega}_{\lambda} +
   {\lambda}^2B(t,x,y,\partial_{t,x,y} {\omega}) +
   i{\lambda}f(t,x,\partial_x 
   {\omega}) -
   {\lambda}\partial_{\eta}^2B(t,x,y,\partial_{t,x,y} {\omega})
   \partial^2_{y}{\omega})/2 ){\phi}  \\ 
    + D_t{\phi} + {\lambda}\partial_{\eta}B(t,x,y,\partial_{t,x,y} {\omega})
   D_{y} {\phi} + \partial^2_{\eta}B(t,x,y,\partial_{t,x,y} {\omega})
   D_{y}^2 {\phi}/2 \\ + i\partial_{\xi} f(t,x,\partial_x
   {\omega}) D_x{\phi} 
+ A(t,x,y,\partial_{t,x,y} {\omega})D_y{\phi} +  \sum_{j \ge
  0}{\lambda}^{-j}R_{j}(t,x,y,D_{t,x,y}){\phi} 
\end{multline}
where $R_0(t,x,y) = F_0(t,x,y,\partial_{t,x,y}{\omega})$.
Here the values of the symbols at $(t,x,y,\partial_{t,x,y} {\omega})$ will
be replaced by finite Taylor expansions at $(t,x,y,\partial_{t,x,y} \re
{\omega})$. In fact, the almost analytic extensions are determined
by these Taylor expansions.

Because of the inhomogeneity coming from the terms of $B$, we shall
use a phase function ${\omega}(t,x)$ which is constant in~$y$, so that
\begin{equation}\label{udef}
 u_{\lambda}(t,x,y) = e^{i{\lambda}{\omega}(t,x)}
 \sum_{j\ge 0} {\phi}_j(t,x,y) {\lambda}^{-j/N} \qquad {\lambda}\ge 1
\end{equation}
When $\partial_y {\omega} \equiv 0$ the expansion~\eqref{exp0} becomes
\begin{multline}\label{exp}
  e^{-i{\lambda}{\omega}}P^* e^{i{\lambda}{\omega}}{\phi} =
\big({\lambda}(\partial_t{\omega}_{\lambda} +
     if(t,x,\partial_x 
   {\omega})){\phi} \\
    + D_t{\phi} + \partial^2_{\eta}B(t,x,y,\partial_{t,x} {\omega},0)
   D_{y}^2 {\phi}/2 + A(t,x,y,\partial_{t,x} {\omega},0)D_y{\phi} +
   i\partial_{\xi} f(t,x,\partial_x 
   {\omega}) D_x{\phi} \\
 +  \sum_{j \ge
  0}{\lambda}^{-j}R_{j}(t,x,y,D_{t,x,y}){\phi} 
\end{multline}
where $R_0(t,x,y) = F_0(t,x,y,\partial_{t,x}{\omega},0)$, and
$R_m(t,x,y,D_{t,x,y})$ are differential operators of order $j$ in $t$,
order $k$ in $x$ and order $\ell$ in $y$, where $j + k + \ell \le m+2$ for $m > 0$.
In fact, this follows since
$\partial_\tau^j\partial_\xi^{\alpha}\partial_\eta^{\beta} F_k \in S^{k-j
  -|{\alpha}| -|{\beta}|}$ by homogeneity.

\section{The Eikonal Equation}

We shall first solve the eikonal equation approximately, which is
given by the highest order term of~\eqref{exp} 
\begin{equation}\label{eikeq}
  \partial_t{\omega} + if(t,x,\partial_x{\omega}) = 0
\end{equation}
where $t \mapsto f(t,x,{\xi})$ changes sign from $+$ to $-$ for
some~$(x,{\xi})$ as $t$
increases in a neighborhood of ${\Gamma} = \set{(t,x_0,{\xi}_0): \ t \in
  I}$ on which $f(t,x,{\xi})$ vanishes. If $|I| \ne 0$ then by
reducing to minimal bicharacteristics as in Section~\ref{normform}, we
may assume that~$f$ vanishes of infinite order 
at~${\Gamma}$. We shall choose the phase
function so that  $\im {\omega} \ge 0$ and $\partial_x^2 \im{\omega} >
0$ near the interval. By changing coordinates, it is no
restriction to assume $0 \in I$. We shall use the approach by
H\" ormander~\cite{ho:nec} in the principal type case and use the
phase function to localize in~$t$ and~$x$. Observe that
since ${\omega}$ does not depend on ~$y$ the
localization in the $y$ variables will be done in the amplitude ${\phi}$.

We shall take the Taylor expansion of ${\omega}$ in $x$:
\begin{equation}\label{omegaexp}
 {\omega}(t,x) = w_0(t) +
 \w{x-x_0(t), {\xi}_0(t)} + \sum_{2 \le |{\alpha}|
   \le K}
 w_{{\alpha}}(t) (x-x_0(t))^{\alpha}/{\alpha}! 
\end{equation}
Here ${\alpha}= ({\alpha}_1, {\alpha}_2, \dots)$, with
${\alpha}_j \in \bn$,   ${\alpha}! = \prod_j {\alpha}_j!$ and
$|{\alpha}| = {\alpha}_1 + {\alpha}_2  + \dots$. 
Then we find that 
\begin{multline}\label{dtomega}
 \partial_t{\omega}(t,x) = w_0'(t) 
 - \w{x_0'(t) ,{\xi}_0(t) } +  \w{x-x_0(t), {\xi}'_0(t)} +
  \sum_{2 \le |{\alpha}| \le K} w_{\alpha}'(t)(x-x_0(t))^{\alpha}/{\alpha}!\\
  -  \sum_{\substack{1 \le |{\alpha}| \le K-1\\ k}} 
w_{\alpha + e_k}(t)(x-x_0(t))^{\alpha}x_{0,k}'(t)/{\alpha}!
\end{multline}
where $e_k = (0, \dots, 0,1,0,
\dots, 0)$ is the $k$:th unit vector. We
also find
\begin{equation}\label{dxomega}
  \partial_{x_j} {\omega}(t,x) = {\xi}_{0j}(t) +
  \sum_{1 \le |{\alpha}| \le K-1}   w_{{\alpha}+
   e_j}(t)(x-x_0(t))^{\alpha}/{\alpha}! \\ =
 {\xi}_{0j}(t)  +  {\sigma}_{j}(t,x) 
\end{equation}
Here ${\xi}_0(t) = ({\xi}_{0,1}(t), \dots)$ and ${\sigma}=
\set{{\sigma}_j}_j$ is a finite expansion in powers of
${\Delta}x = x-x_0$.
We define the value of $ f(t,  x,\partial_x{\omega})$ by the Taylor
expansion 
 \begin{multline}
 f(t,  x,\partial_x{\omega}) = f(t,x,{\xi}_0+ {\sigma}) \\ 
= f(t, x, {\xi}_0) + \sum_{j}^{}\partial_{\xi_j}f(t, x,{\xi}_0){\sigma}_{j}
 + \sum_{jk}^{}\partial_{\xi_j}\partial_{\xi_k} f(t, x,{\xi}_0)
 {\sigma}_{j}{\sigma}_{k}/2 + \dots
\end{multline}
Now the value at $x= x_0$ of~\eqref{eikeq} is equal to
$ 
 w_0'(t) -
 \w{x_0'(t),{\xi}_0(t)}  + i f(t, x_0(t), {\xi}_0(t)). 
$ 
This vanishes if
\begin{equation}\label{2a}
\left\{
\begin{aligned} 
&\re w'_0(t) =  \w{x_0'(t),{\xi}_0(t)}\\
&\im w_0'(t) = -f(t, x_0(t), {\xi}_0(t))
\end{aligned}
\right.
\end{equation}
so by putting $w_0(0) = 0$ this will determine $w_0$ once we have $(x_0(t) ,{\xi}_0(t) )$.

We shall simplify the notation and put $w_k =
\set{w_{\alpha}k!/{\alpha}!}_{|{\alpha}| = k}$ 
so that $w_k$ is a multilinear form.
The first order terms in $x-x_0$ of ~\eqref{eikeq}
vanish if
\begin{equation*}
{\xi}_0'(t) -  w_{2}(t)x_0'(t) + i
(\partial_x
f(t,x_0(t),{\xi}_0(t)) + \partial _{\xi}f(t,x_0(t),{\xi}_0(t))w_{2}(t)) = 0
\end{equation*}
We find by taking real and imaginary parts that
\begin{equation}\label{2}
\left\{
\begin{aligned} 
&{\xi}_0' = \re w_{2} x_0'  +
\partial _{\xi}f(t,x_0,{\xi}_0) \im w_{2}\\
&x_0' = (\im w_{2})^{-1}(\partial_x f(t,x_0,{\xi}_0) + \partial
_{\xi}f(t,x_0,{\xi}_0) \re w_{2})
\end{aligned}
\right.
\end{equation}
with $(x_0(0), {\xi}_0(0)) = (x_0, {\xi}_0)$, which will determine $x_0(t)$ and
${\xi}_0(t)$ if $|\im w_{2}| \ne 0$. 

The second order terms in $x-x_0$ vanish if
\begin{equation*}
w'_{2}/2 - w_{3}x_0'/2
+ i(\partial_{\xi}f w_{3}/2 + \partial_x^2f/2 + 
 \partial_x\partial_{\xi}f w_{2} + w_2 \partial_{\xi}^2f w_2/2)  = 0
\end{equation*}
which gives
\begin{equation}\label{w2eq}
 w'_{2}  =  w_{3}x_0'
- i(\partial_{\xi}f  w_{3}
 + \partial_x^2f  + 
2 \partial_x\partial_{\xi}f w_{2} + w_2 \partial_{\xi}^2f w_2) 
\end{equation}
with initial data $ w_2(0) $ so that 
$\im w_{2}(0) > 0$. 

We find that the terms of order $k > 2$ vanish if
\begin{equation}\label{wkeq}
w_{k}' -  w_{k+1}x'_0 = F_k(t,x_0,{\xi}_0,\set{w_j})
\end{equation}
where we may choose $w_k(0) = 0$. Here $F_k$ is a linear combination of
the derivatives of $f$ of order 
$\le k$ multiplied by polynomials in $w_j$ with $2 \le j \le k+1$. When
$k = K$ we get  $w_K' = F_K(t,x_0,{\xi}_0, \set{w_j})$ where $j \le K$. The
equations~\eqref{2}--\eqref{wkeq} form a quasilinear system of
differential equations, which can be solved in a convex neighborhood of~ $0$.
In the case when $|I| \ne 0$ we have assumed that
$\partial^{\alpha}_{t,x,{\xi}}f(t,x_0,{\xi}_0) \equiv 0$, $ \forall
{\alpha}$, for $t \in I$. Then we find from~\eqref{2}--\eqref{wkeq}
that  $x_0$, ${\xi}_0$ and $w_k$ are constant in ~$t \in I$,
so we may solve~\eqref{2}--\eqref{wkeq} in a
convex neighborhood of~ $I$. Observe that the lower order
terms cannot change the condition that $\im \partial_x^2{\omega} \ge c
> 0$ and $\im {\omega}(t,x) \ge 0$ if $|x-x_0(t)| \ll 1$. Summing up,
we have proved the following result. 

\begin{prop}\label{eikprop0}
Let\/ ${\Gamma} = \set{(t,x_0,{\xi}_0): \ t \in I}$ and assume that
$\partial_{t}^{k} 
\partial_{x}^{{\alpha}} \partial_{{\xi}}^{{\beta}}f(t,x_0,{\xi}_0) = 0$
for all $t \in I$ in the case  $|I|\ne 0$. Then we may solve~\eqref{eikeq}
with ${\omega}(t,x)$ given by~\eqref{omegaexp} in a convex neighborhood~
${\Omega}$ of\/
${\Gamma}$ modulo $\Cal O(|x-x_0(t)|^M)$, $\forall\, M$, such that $
(x_0(t), {\xi}_0(t)) = (x_0,{\xi}_0)$ when $t \in I$ and $w_k(t) \in
C^\infty$ such that $w_0(t) = 0$,
$\im w_2(t) > 0$ and $w_k(t) = 0$, $k > 2$, when $t \in
I$. 
\end{prop}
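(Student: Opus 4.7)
The plan is to interpret the formal system \eqref{2a}, \eqref{2}, \eqref{w2eq}, \eqref{wkeq} as a quasilinear first order ODE for the unknown $Y(t) = (x_0(t),\xi_0(t),w_0(t),w_2(t),\ldots,w_K(t))$, solve it by standard theory for any fixed truncation order $K$, and then read off the properties claimed in the proposition directly from the structure of the equations.

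Rewritten in the form $Y'(t) = G(t,Y(t))$, the right hand side $G$ is smooth wherever $\im w_2$ is invertible, since $(\im w_2)^{-1}$ enters only through \eqref{2} and all other right hand sides are polynomials in $\{w_j\}_{2\le j\le K}$ with coefficients given by derivatives of $f$ of order $\le K+1$ evaluated at $(t,x_0(t),\xi_0(t))$. Choosing initial data at some $t_0 \in I$ by $x_0(t_0)=x_0$, $\xi_0(t_0)=\xi_0$, $w_0(t_0)=0$, $\im w_2(t_0)$ positive definite and $w_k(t_0)=0$ for $k > 2$, Picard-Lindel\"of produces a unique $C^\infty$ solution on some open interval about $t_0$, and the condition $\im w_2 > 0$ propagates by continuity. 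When $|I|\ne 0$, the assumption $\partial_t^j\partial_x^\alpha\partial_\xi^\beta f(t,x_0,\xi_0) \equiv 0$ for $t \in I$ and all $j,\alpha,\beta$ makes $G(t,Y(t_0)) \equiv 0$ on $I$, so by ODE uniqueness the constant function $\wt Y(t) \equiv Y(t_0)$ is the solution on $I$; this gives at once that $(x_0,\xi_0)$ is constant, $w_0=0$, $w_k=0$ for $k>2$, and $\im w_2 > 0$ on~$I$. Patching with the local existence at the endpoints of $I$ extends the solution to a convex neighborhood $\Omega$ of~$\Gamma$.

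With $Y(t)$ in hand, I would define $\omega$ by \eqref{omegaexp} and verify that $\partial_t\omega + if(t,x,\partial_x\omega) = \Cal O(|x-x_0(t)|^{K+1})$ on $\Omega$ by Taylor expanding the left hand side in $(x-x_0(t))$ at $x_0(t)$: the derivation preceding \eqref{wkeq} shows that the ODE system is exactly the requirement that the coefficients of $(x-x_0(t))^\alpha$ vanish for $|\alpha|\le K$, and since $M$ is arbitrary one takes $K \ge M-1$. The main technical point is the almost analytic extension of $f$ in the $\xi$ variable, needed to make sense of $f(t,x,\partial_x\omega)$ when $\partial_x\omega = \xi_0(t) + \sigma(t,x)$ has imaginary part of size $\Cal O(|x-x_0(t)|)$ near $\Gamma$; but the $\ol\partial$-error of the extension is $\Cal O(|\im\partial_x\omega|^N)$ for any~$N$ and is therefore absorbed into the remainder, so this obstacle is handled routinely as in the reference~\cite{T2}.
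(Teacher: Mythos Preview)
Your proposal is correct and follows essentially the same route as the paper: the text preceding the proposition sets up the system \eqref{2a}--\eqref{wkeq}, observes it is a quasilinear ODE solvable in a convex neighborhood of~$0$, and then notes that when $|I|\ne 0$ the vanishing hypothesis on all derivatives of~$f$ forces the solution to be constant on~$I$, so one can solve in a convex neighborhood of~$I$. Your write-up is somewhat more explicit---naming Picard--Lindel\"of, invoking ODE uniqueness for the constant solution on~$I$, and spelling out that the almost analytic extension error is absorbed into the $\Cal O(|x-x_0(t)|^M)$ remainder---but these are exactly the ingredients the paper uses implicitly.
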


Then we obtain that $\im {\omega}(t,x) \ge c|x - x_0(t)|^2$ near ${\Gamma}$, $c
> 0$, so the errors that are $\Cal O(|x-x_0|^M)$ in the eikonal
equation will give 
terms that are bounded by $C_M{\lambda}^{-M/2}$.
But we have to show that $t \mapsto f(t,x_0(t), {\xi}_0(t))$
also changes sign from $+$ to $-$ as $t$ increases
for some choice of $ (x_0,{\xi}_0) $. 
This problem will be studied in the next section, with a special
emphasis on the finite vanishing case.
By~\eqref{2a} we then obtain that $t \mapsto \im
w_0(t)$ has a local minimum on ~$I$ 
which can be equal to~ $0$ by subtracting a constant.

\section{The Change of Sign}

We have assumed that condition Sub(${\Psi}$) for $P$ is {\em not} satisfied
near the subprincipal semibicharacteristic ${\Gamma} =
\set{(t,x_0,{\xi}_0): \ t \in I}$, so that $t \mapsto 
f(t,x,{\xi})$ changes sign from $+$ to $-$ for some~$(x,{\xi})$ as $t$
increases near~${\Gamma}$. But  
after solving the eikonal equation we have to know that  $t \mapsto
f(t,x_0(t),{\xi}_0(t))$ has the same sign change, possibly after changing
the starting point ~$(x_0,{\xi}_0)$. In order to do so we shall use the
invariance of condition Sub(${\Psi}$), but note that
condition~\eqref{cond2a} is only assumed when the change of
sign is of infinite order. Therefore we shall first consider the case when the
sign change is of {finite order} and show that this condition is 
preserved after solving the eikonal equation. Thus assume that 
\begin{equation}\label{finitezero}
 \partial_t^k f(t_0,x_0,{\xi}_0) < 0 \qquad \text{and} \qquad \partial_t^j
   f(t_0,x_0,{\xi}_0) = 0 \qquad \text{for } j < k
\end{equation}
for some odd integer $k$, where we may assume $t_0= 0$. Now, if the
order of the zero is not 
constant in a neighborhood of ~ $(x_0,{\xi}_0)$ then in any  
neighborhood the mapping
$t \mapsto f(t,x,{\xi})$ must have a zero of odd order with sign change from $+$ to
$-$, and the order of vanishing is constant almost everywher on  $f^{-1}(0)$. 
In fact, this follows since $ \partial_t^k f \ne 0 $, 
$t \mapsto f(t,x,{\xi})$ goes from $ + $ to $ - $ and
the set where the order of the zero changes is nowhere dense
in $f^{-1}(0)$ since it is the union of boundaries of closed sets in the relative
topology. By possibly changing $(t_0, x_0,{\xi}_0)$ we may assume that 
\eqref{finitezero} holds with $ t_0 = 0 $,  and 
that the order of the zero is odd and constant near~$(x_0,{\xi}_0)$, then the zeros
forms a smooth manifold by the implicit function theorem.
By using Taylor's formula, we find that
$f(t,w) = a(t,w)(t-t_0(w))^k$, where $k \ge 1$ is odd, $w=(x,{\xi})$,
$t_0(w_0) = 0$ and  $a < 0$ in a neighborhood of
$w_0 = (x_0,{\xi}_0)$. Then we find
\begin{equation}\label{dwf}
 \partial_w f = \partial_w a (t-t_0)^k - a k(t-t_0)^{k-1} \partial_w t_0
\end{equation}
which vanishes of at least order $k-1$ in $t$ at $f^{-1}(0)$. 
Let $w(t) = (x_0(t),{\xi}_0(t))$ then
 $$
 f(t,w(t)) = f(t,w_0) +
 \partial_{w}f(t, w_0){\Delta}w(t) + \Cal O(|{\Delta}w(t))|^2)
 $$
where ${\Delta}w(t) = w(t) - w_0$. Now $t \mapsto f(t, w_0)$ vanishes of order
$k$ in~$t$ at 0 and $t \mapsto \partial_{w}f(t, w_0)$ vanishes of at least
order $k-1$, so if $t \mapsto {\Delta}w(t)$ vanishes of at least order 
$k$ then by~\eqref{dwf} we find that $t \mapsto f(t,w(t))$ vanishes of order $k$. Since
$\frac{d}{dt}{\Delta}w(t) = w'(t)$, we will need
the following result.

\begin{lem}\label{vanlem}
Let $(x_0(t), {\xi}_0(t))$ be the solution to the
equation~\eqref{2} with $\im w_2(0) \ne 0$ and assume that $t \mapsto
\partial_{w}f(t,x_0, {\xi}_0)$ vanishes of order $r \ge 1$ at $t=0$. Then
 $(x_0'(t), {\xi}_0'(t))$ vanishes of order $r$ and $ \Delta w(t) $ 
 vanishes of order $r+1$ at $t=0$. 
\end{lem}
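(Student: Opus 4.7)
The plan is to rewrite the ODE \eqref{2} in the form $w'(t) = M(t)F(t)$, where $F(t) := \partial_w f(t, w(t))$ and $M(t)$ is a smooth matrix, and then prove $F^{(j)}(0) = 0$ for $j = 0, \ldots, r-1$ by induction on $j$. From this it follows that $w'(t)$ vanishes of order $r$ at $t=0$, and integrating yields $\Delta w(t) = O(t^{r+1})$.

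Substituting the first line of \eqref{2} into the second, I obtain
\begin{equation*}
w'(t) = M(t)\begin{pmatrix}\partial_x f(t,w(t))\\ \partial_{\xi} f(t,w(t))\end{pmatrix},\quad
M(t) = \begin{pmatrix}(\im w_2)^{-1} & (\im w_2)^{-1}\re w_2 \\ \re w_2(\im w_2)^{-1} & \re w_2(\im w_2)^{-1}\re w_2 + \im w_2\end{pmatrix},
\end{equation*}
whose entries are smooth near $t=0$ since $\im w_2(0)$ is invertible by assumption. Differentiating $w' = MF$ via Leibniz, it follows that if $F^{(b)}(0) = 0$ for all $b < j$, then $w^{(k)}(0) = 0$ for all $1 \le k \le j$.

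For the induction, expanding $F^{(j)} = \frac{d^j}{dt^j}(\partial_w f)(t,w(t))$ by the chain rule produces a finite sum of terms of the form
\begin{equation*}
c_{a,\alpha,\vec k}\cdot\partial_t^{a}\partial_w^{\alpha}(\partial_w f)(0,w_0)\cdot\prod_{i=1}^{|\alpha|}w^{(k_i)}(0),\qquad a + \sum_i k_i = j,\ k_i \ge 1.
\end{equation*}
If $|\alpha| = 0$, the term reduces to $\partial_t^{j}\partial_w f(0,w_0)$, which vanishes by the hypothesis of the lemma since $j \le r-1$. If $|\alpha| \ge 1$, then each $k_i$ satisfies $1 \le k_i \le j$, so $w^{(k_i)}(0) = 0$ by the inductive hypothesis, and the term vanishes. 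Hence $F^{(j)}(0) = 0$, closing the induction.

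The main bookkeeping obstacle is verifying that every term in the chain-rule expansion is killed by one of the two mechanisms above. The key point is the constraint $a + \sum_i k_i = j \le r-1$ with $k_i \ge 1$: either $a = j$ (so the hypothesis on the pure $t$-derivatives of $\partial_w f$ at $(0,w_0)$ applies) or else at least one factor $w^{(k_i)}(0)$ appears with $1 \le k_i \le j$ (so the inductive vanishing applies). Once this is done, $w'(t) = M(t)F(t)$ vanishes of order $r$ at $t=0$, which is the first conclusion, and integration gives $\Delta w(t) = O(t^{r+1})$.
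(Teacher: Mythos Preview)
Your proof is correct and follows essentially the same approach as the paper: write $w'(t)=M(t)\,\partial_w f(t,w(t))$ with $M$ smooth near $t=0$ (the paper calls this matrix $A$ and merely notes $|A(0)|\ne 0$), then use the chain rule (Fa\`a di Bruno) and induction to show that the derivatives of $\partial_w f(t,w(t))$ at $t=0$ coincide with those of $\partial_w f(t,w_0)$ up to order $r-1$, whence $w'$ vanishes of order $r$. Your explicit computation of $M$ and the slightly cleaner organization of the induction on $j$ are cosmetic differences only.
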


\begin{proof}
By \eqref{2} we have that 
\begin{equation}
 w'(t) = (x_0'(t),{\xi}_0'(t)) = A(t) \partial_wf(t,w(t))\qquad w(0) =
 w_0
\end{equation}
Here we have $|A(0)| \ne 0$ if $\im w_2(0) \ne 0$, in fact
$w'(0) = 0$ then gives $ \partial _{\xi}f(0, w_0) = 0
$ and $ \partial _x f(0, w_0) = 0$ by~\eqref{2}.

Now we denote ${\phi}_0(t) = \partial _wf(t,w_0)$ and ${\phi}_1(t) =
\partial _wf(t, w(t))$. Then we have that $w'(t) =
A(t){\phi}_1(t)$ and the condition is that ${\phi}_0(t)$ vanishes of order $r
\ge 1$ at 0. We shall proceed by induction, and first assume that $r =
1$. Since $w(0) = w_0$ we find ${\phi}_1(0) = {\phi}_0(0)= 0$ and thus
$w'(0) = 0$.   

Next, for $r > 1$ we assume by induction that
$w'(t)$ vanishes by order $r-1$ at 0 so $w^{(k)}(0) = 0$ for $k < r$, and
then we shall show that $w^{(r)}(0) = 0$ so that $w'$ vanishes of
order $r$. By
using the chain rule we obtain that
\begin{equation}
 \partial_t^r \left(g(t,w(t))\right) = \sum_{\substack{0 \le j \le r\\
  \sum_i r_i + j = r}}
 c_{j,{\alpha}}\partial_t^j\partial_w^{\alpha} g(t,w(t)) \prod_{i=1}^{|{\alpha}|}
 w^{(r_i)}(t)
\end{equation}
for any  $g(t,w) \in C^\infty$. Thus, for $g = \partial_w f$ we find that 
$${\phi}_1^{(k)}(0) = {\phi}_0^{(k)}(0) +
\partial_t^{k-1}\partial_w^2f(0,x_0,{\xi}_0)w'(0) + \dots + 
\partial_w^2f(0,x_0,{\xi}_0)w^{(k)}(0) =   {\phi}_0^{(k)}(0) = 0
$$
for $k < r$, since the other terms has some factor $w^{(j)}(0) = 0$,
$j \le k$,   
which implies that ${\phi}_1(t)$ vanishes for order $r$. Since $w' = A
{\phi}_1$ we find that $w'(t)$ vanishes of order $r$, 
which gives the induction step and the proof.
\end{proof}

Now, if $f(t,w_0)$ vanishes of order $k$ then
$\partial_wf(t,w_0)$ vanishes of order $k-1$. Thus $w'(t)$ vanishes of
order $k-1$  by Lemma~\ref{vanlem} and since $w(0) = w_0$ we find that
${\Delta}w(t)$ vanishes of order $k$. Thus, we find that $f(t,w(t)) -
f(t,w_0)$ vanishes of order $2k-1$, so these terms vanish of same order
if $k > 1$. In the case $k = 1$, we shall use an argument of
H\" ormander~\cite{ho:nec} for the principal type case. We obtain
from~\eqref{2a} that $\partial_t \left(f(t,w(t))\right) = -\im w_0''(t)$ thus
\begin{equation}
 \im w_0''(0) = -\partial_t f(0, w_0) -
 \partial_{\xi}f(0,w_0)\cdot {\xi}_0' -
 \partial_{x}f(0,w_0)\cdot x_0'
\end{equation}
where $\partial_tf(0,w_0) = -c < 0$. We find from~\eqref{2} that
\begin{equation}
\left\{
\begin{aligned} 
&{\xi}_0'(0) = \re  w_2(0)x_0'(0)  +  \partial _{\xi}f(0, w_0)\im  w_2(0) \\
& x_0'(0) = (\im  w_2(0))^{-1}(\partial_x f(0,w_0) + \partial
_{\xi}f(0,w_0) \re  w_2(0))  
\end{aligned}
\right.
\end{equation}
If $\partial _{\xi}f(0,w_0) = 0$ then we find that $
x_0'(0) =  (\im  w_2(0))^{-1}\partial_x f(0,w_0)$
and obtain
\begin{equation}
 \im w_0''(0) = c -   \partial_{x}f(0,w_0)(\im  w_2(0))^{-1}
\partial_x f(0,w_0)   > c/2 > 0
\end{equation}
by choosing $\im w_2(0) = {\kappa}\id$ with ${\kappa} \gg 1$. If
$\partial _{\xi}f(0,w_0) \ne 0$ then we may choose $\re
w_2(0)$ so that 
\begin{equation}
 \partial_x f(0,w_0) + \partial _{\xi}f(0,w_0) \re
 w_2(0) = 0
\end{equation}
Then we find $x_0'(0) = 0$ and we obtain
\begin{equation}
  \im w_0''(0) = c -
  \partial_{{\xi}}f(0,w_0)\im
  w_2(0)\partial_{{\xi}}f(0,w_0) > c/2 > 0
\end{equation}
by choosing $\im w_2(0) = {\kappa}\id$ with $0 < {\kappa} \ll 1$.
Thus in both cases we find that $\partial_t f(t,w(t)) = \im w_0'(t) < 0$ 
at $ t=0 $.

We find that $ t \mapsto f(t,w(t)) $ changes sign from $ + $ to $ - $ of order $ k $ as  $ t $ increases 
at $ t=0 $. We may then rewrite the equation as
\begin{equation}
 \im w_0'(t) = -t^kc(t)
\end{equation}
where $c(t) > 0$ in a neighborhood of the
origin. 
Since $\im w_2(0) > 0$ we find that  
\begin{equation}\label{1}
  e^{i{\lambda}{\omega}(t,x)} \le e^{-c_0{\lambda}(t^{k+1} + |x-x_0|^2)}
  \qquad |x-x_0| \ll 1 \qquad |t| \ll 1
\end{equation}
Thus the errors that are $\Cal O(|x-x_0|^M)$ in the eikonal equation will give
terms that are bounded by $C_M{\lambda}^{-M/2}$.

We shall also consider the case when $t \mapsto f(t,x,{\xi})$
changes sign from $+$ to $-$ of infinite order near~${\Gamma}$. 
If ${\Gamma}$ is not a point, then by reducing to a minimal
bicharacteristic as in Section~\ref{normform}, we
may assume that $f(t,x_0,{\xi}_0)$ vanishes of 
infinite order when $t \in I$ and $|I| \ne 0$. We then obtain an
approximate solution to the eikonal equation by
solving~\eqref{2}--\eqref{wkeq} with initial data $w = (x,{\xi})$ and
$w_k(0)$, $k \ge 2$, which gives a 
change of coordinates $(t,w) \mapsto (t,w(t))$. If in any neighborhood
of~${\Gamma} = \set{(t,x_0,{\xi}_0): \ t \in I}$ there exist points
in $f^{-1}(0)$ where $\partial_t f < 0$, then as before we can construct
approximate solutions in any neighborhood of~${\Gamma}$
satisfying~\eqref{1} with $k = 1$. 
If $\partial_t f \ge 0$ on $f^{-1}(0)$ in some neighborhood of~${\Gamma}$,
then by the invariance of  
condition (${\Psi}$) there will still exist a change of sign of $t
\mapsto f(t,w(t))$ from 
$+$ to $-$ in any neighborhood of~${\Gamma}$  after the change of
coordinates, see~\cite[Lemma~26.4.11]{ho:yellow}. (Recall that 
conditions~\eqref{cond1} and~\eqref{cond2} hold in some
neighborhood of ${\Gamma}$.) 
Thus if $F'(t) = -\im w'_0(t) = f(t,w(t))$ then $t \mapsto F(t)$ has a
local maximum 
at some $t= t_0$, and after subtraction the maximum can be assumed to be
equal to 0. 
By choosing suitable initial value
$(x_0,{\xi}_0)$ for~\eqref{2} at $t = t_0$ we obtain that
\begin{equation}\label{11}
  e^{i{\lambda}{\omega}(t,x)} \le e^{{\lambda}(F(t) - c|x-x_0|^2)}
  \qquad |x-x_0| \ll 1
\end{equation}
where $F'(t) = f(t,w(t))$ so that $\max_I F(t) = 0$ with $F(t) <
0$ when $t \notin I$ near $\partial I$.

\begin{prop}\label{eikprop}
Assume that $t \mapsto f(t,x_0,{\xi}_0)$ changes sign from $+$ to $-$
as $t$ increases near~$I$ and that $\partial_{t}^{k} 
\partial_{x}^{{\alpha}} \partial_{{\xi}}^{{\beta}}f(t,x_0,{\xi}_0) = 0$
for all $t \in I$ when $|I|\ne 0$. Then we may solve~\eqref{eikeq}
in a neighborhood~${\Omega}$ of\/
${\Gamma} = \set{(t,x_0,{\xi}_0): \ t \in I}$ modulo $\Cal
O(|x-x_0(t)|^M)$, $\forall\, M$, 
with ${\omega}(t,x)$ given by~\eqref{omegaexp} such that
the curve $t \mapsto (x_0(t), {\xi}_0(t))$, $t \in (t_1, t_2)$, is
arbitrarily close to~ ${\Gamma}$, $w_k(t) \in C^\infty$,
$\im w_2(t) \ge c > 0$ when $t \in (t_1, t_2)$, $\min_{(t_1, t_2)}
\im w_0(t) = 0$ and $\im w_0(t_j) = c > 0$, $j = 1$, $2$.
\end{prop}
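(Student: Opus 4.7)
The plan is to collect the approximate eikonal solution from Proposition~\ref{eikprop0} together with the sign-change analysis carried out above, and then to normalize $w_0$ by an affine shift. First I would apply Proposition~\ref{eikprop0} to produce ${\omega}(t,x)$ of the form~\eqref{omegaexp} satisfying~\eqref{eikeq} modulo $\Cal O(|x-x_0(t)|^M)$, by solving the quasilinear system~\eqref{2}--\eqref{wkeq} on a convex neighborhood of~$I$. The initial base point $(x_0(0),{\xi}_0(0))$ is taken close to $(x_0,{\xi}_0)$ (to be adjusted in accordance with the sign-change analysis below), the remaining $w_k(0)$ with $k>2$ are set to zero, and $w_2(0) = i{\kappa}\id$ is chosen with ${\kappa} > 0$ either large or small depending on whether $\partial_{\xi}f$ vanishes at the candidate sign-change point, as in the $k=1$ computation of $\im w_0''(0)$ in Section~\ref{normform}. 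The Riccati-type equation~\eqref{w2eq} then guarantees that $\im w_2(t) \ge c > 0$ persists throughout the interval of interest.

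Next I would verify that $t \mapsto f(t,x_0(t),{\xi}_0(t))$ retains the sign change from $+$ to $-$ as $t$ increases. If $f(t,x_0,{\xi}_0)$ vanishes of finite odd order $k$ at some $t_0$ near $I$, then Lemma~\ref{vanlem} shows that ${\Delta}w(t)= w(t) - w_0$ vanishes of order $k$ at $t_0$, so~\eqref{dwf} and a Taylor expansion give $f(t,w(t)) - f(t,w_0) = \Cal O((t-t_0)^{2k-1})$; hence for $k \ge 2$ the order-$k$ sign change is preserved along $t \mapsto f(t,w(t))$, while for $k = 1$ the explicit computation of $\im w_0''(0)$ with the suitable choice of $\re w_2(0)$ forces $\im w_0''(0) > 0$, again yielding the sign change. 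In the infinite-order case the map $(t,w) \mapsto (t,w(t))$ is a diffeomorphism of a neighborhood of~${\Gamma}$, so by the invariance of condition $({\Psi})$ (\cite[Lemma~26.4.11]{ho:yellow}) applied to the transported symbol, the $+$-to-$-$ sign change of $f$ must persist arbitrarily close to~${\Gamma}$; choosing the base point accordingly places the curve $(x_0(t),{\xi}_0(t))$ arbitrarily close to~${\Gamma}$ while retaining a sign change.

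Finally, using~\eqref{2a} we have $\im w_0'(t) = -f(t,x_0(t),{\xi}_0(t))$, so the sign change of $f$ from $+$ to $-$ translates into a sign change of $\im w_0'$ from $-$ to $+$, whence $\im w_0$ attains a local minimum~$m$ at some interior point $t_0$. Subtracting the real constant~$m$ from $w_0$ leaves the eikonal equation unaffected and achieves $\min \im w_0 = 0$. By continuity I can then fix an interval $(t_1,t_2) \ni t_0$ on which $\im w_0(t_j) = c > 0$ for $j = 1, 2$ and $\im w_0 \ge 0$ throughout, which combined with $\im w_2 \ge c > 0$ yields the bound~\eqref{11}. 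The main obstacle I anticipate is the infinite-order case, where one must carefully combine the reduction to minimal bicharacteristics from Section~\ref{normform} with the $({\Psi})$-invariance to ensure that the sign change of $f$ survives the nontrivial diffeomorphism induced by the ODE flow, and that the resulting curve can simultaneously be kept inside any prescribed neighborhood of~${\Gamma}$.
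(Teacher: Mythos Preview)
Your proposal is correct and follows essentially the same route as the paper: the proposition is not proved separately but is a summary of the construction via Proposition~\ref{eikprop0} combined with the sign-change analysis (Lemma~\ref{vanlem} for finite order, the explicit $\im w_0''(0)$ computation for $k=1$, and the invariance of condition~$({\Psi})$ for infinite order), followed by the affine normalization of $\im w_0$. One small point: in the infinite-order case the paper first splits off the easy sub-case where $\partial_t f<0$ somewhere on $f^{-1}(0)$ near~${\Gamma}$ (reducing to the $k=1$ computation) before invoking~\cite[Lemma~26.4.11]{ho:yellow}, and the persistence of $\im w_2>0$ is argued by the constancy of the coefficients on~$I$ (when $|I|\ne 0$) plus continuity, rather than via Riccati structure.
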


Observe that since $\im w_0 \ge 0$ we find that
$f(t_0,x_0(t_0),{\xi}_0(t_0)) = -\im w_0'(t) = 0$ at a minimum~$t_0
\in (t_1, t_2)$.
As before, the errors that are $\Cal O(|x-x_0|^M)$ in the eikonal
equation will give terms that are bounded by $C_M{\lambda}^{-M/2} $,
$\forall\, M$.  
Observe that cutting off where $\im w_0 > 0$ will give errors that are
$\Cal O({\lambda}^{-M})$, $\forall\, M$.

\section{The Transport Equations}

Next, we shall solve the transport equations given by the following
terms in~\eqref{exp}:
\begin{multline}\label{transexp}
  D_t{\phi} + \partial_{\eta}^2B(t,x,y,\partial_{t,x}{\omega},0)
  D_y^2{\phi}/2   
+ A(t,x,y,\partial_{t,x}{\omega},0)D_y{\phi} 
   + i \partial_{\xi}f(t,x,\partial_x{\omega})D_x {\phi}
\\ + \sum_{j \ge 0}{\lambda}^{-j}R_j(t,x,y,D_{t,x,y}){\phi} 
\end{multline}
near ${\Gamma} = \set{(t,x_0,y_0,0,{\xi}_0,0):\ t \in I}$.
Here $R_0(t,x,y) = F_0(t,x,y,\partial_{t,x}{\omega},0)$ and when $m > 0$
we have that
$R_m(t,x,y,D_{t,x,y})$ are differential operators  of order $j$ in $t$,
order $k$ in $x$ and order $\ell$ in $y$, where $j + k + \ell \le m+2$.  
Assuming the conclusions in Proposition~\ref{eikprop} hold,
we shall choose suitable initial values of the amplitude ${\phi}$ at
$t = t_0$, which is chosen so that $\im w_0(t_0) = 0$.
Observe that the second order differential operator given by the first four terms in~\eqref{transexp} need not be solvable in general. Instead, by Lemma~\ref{intrem} we can treat the $D_x $ and $ D_y $ terms as perturbations, using condition~\eqref{cond2a} in the infinite vanishing case.  

Since the phase function ${\omega}(t,x)$ is complex valued, we will
replace the values of the symbols at $(\tau, \xi) = \partial_{t,x}
{\omega}(t,x)$ by finite Taylor expansions
at~$(\re w_0'(t),{\xi}_0(t))$. By~\eqref{dtomega} and 
~\eqref{dxomega} this will give expansions in
powers of $x-x_0(t)$ and $\im w'_0(t) = -f(t,x_0(t),
{\xi}_0(t))$. Then, we shall solve the transport equations up to
arbitrarily high powers of  $x-x_0(t)$ and $f$. 
Since the imaginary part of the phase function $\im {\omega} \ge 0$
vanishes of second order at $x = x_0(t)$ we will obtain 
by Lemma~\ref{intrem} below that this will give a
solution modulo any negative power of ${\lambda}$.

We shall use the amplitude expansion
\begin{equation}\label{phidef}
{\phi}(t,x,y) = \sum_{k \ge 0}^{} {\varrho}^{-k}{\phi}_k(t,x,y)
\end{equation}
and solve the transport equation recursively in $k$.
Here ${\phi}_k$ depends on ${\varrho}$ but with uniform bounds in a suitable symbol class, and
${\varrho} = {\lambda}^{1/N}$ with $N$ to be determined later. 
By doing the change of variables $(t,x,y) \mapsto (t-t_0,x-x_0(t),
y-y_0)$ we find 
that $D_t$ changes into $D_t -x_0'(t)D_x$ which does not change the
order of $R_j$ as differential operator. 
Thus we may assume $t_0 = 0$, $x_0(t) \equiv 0$ and $y_0 = 0$.

Next, we apply~\eqref{transexp} on ${\phi}$ given by~\eqref{phidef}. Since ${\varrho} =
{\lambda}^{1/N}$ we obtain the terms 
\begin{multline}\label{transeq0}
D_t {\phi} + A_0(t,x)D_x{\phi} + A_1(t,x,y)D_y{\phi} +
A_2(t,x,y)D_y^2{\phi}  \\ + \sum_{j \ge 0}{\varrho}^{-jN}
R_j(t,x,y,D_{t,x,y}){\phi}
\end{multline}
where 
$
A_0(t,x) = 
i \partial_{\xi}f(t,x,{\xi}_0(t)+ {\sigma}(t,x)) - x'_0(t) 
$,
\begin{equation}
A_1(t,x,y)= A(t,x,y,\partial_t{\omega}(t,x),{\xi}_0(t)+ {\sigma}(t,x),0)
\end{equation}
and
\begin{equation}
A_2(t,x,y)= \partial_{\eta}^2B_2(t,x,y,\partial_t
{\omega}(t,x),{\xi}_0(t)+ {\sigma}(t,x),0)/2 
\end{equation}
Here ${\sigma}(t,x)$ is given by~\eqref{dxomega} and
$\partial_t {\omega}(t,x)$ by~\eqref{dtomega}, where the expansion
will be up to a sufficiently high order in~$x$. 
Observe that after the change of variables we have ${\sigma}(t,0) \equiv 0$. 
The values of the symbols will as before be defined by finite Taylor
expansions in the ${\tau}$ and ${\xi}$ variables, which gives 
expansions in powers of~$x$ and $f(t,0,{\xi}(t))$.

We are going to construct solutions $ {\phi}_{k}(t,x,y) =
{\varphi}_k(t,x,{\varrho}y)$ so that
$y \mapsto {\varphi}_{k}(t,x,y) \in C^\infty_0$ uniformly in
${\varrho}$, which gives localization in $|y| \ls {\varrho}^{-1}$.  
Therefore we shall choose ${\varrho}y$ as new $y$ coordinates,
then~\eqref{transeq0} becomes
\begin{multline}
D_t {\phi} + A_0(t,x)D_x{\phi} +
{\varrho}A_1(t,x,y/{\varrho})D_y{\phi} +   
{\varrho}^2A_2(t,x,y/{\varrho})D_y^2{\phi} \\ + \sum_{j \ge 0}
{\varrho}^{-jN}  
R_j(t,y/{\varrho},x,D_t,D_x, {\varrho}D_{y}){\phi}
\end{multline}
By Proposition~\ref{eikprop} the phase function $e^{i{\lambda}w(t,x)}$
gives the cut-off in $x$, and we shall expand the symbols in powers
of~ $x$. Now the Taylor expansion of
$x \mapsto {\varrho}^2A_2(t,x,y/{\varrho})$ will give terms that are 
$\Cal O({\varrho}^2x)$. Therefore we take  ${\varrho}^2x$ as new
$x$ coordinates, which gives
\begin{multline}\label{trexp0}
D_t {\phi}  + {\varrho}^2 A_0(t,x/{\varrho}^2)D_x{\phi}  +
{\varrho}A_1(t,x/{\varrho}^2,y/{\varrho})D_y{\phi}  + 
{\varrho}^2A_2(t,x/{\varrho}^2,y/{\varrho})D_y^2{\phi}
\\ + \sum_{ j \ge 0} {\varrho}^{-jN}
R_j(t,y/{\varrho},x/{\varrho}^2,D_t,{\varrho}^2D_x, {\varrho}D_{y}){\phi} 
\end{multline}

Now the phase function 
$e^{i{\lambda}w(t,x)}  = \Cal O(e^{-c{\varrho}^{N-4}|x|^2})$
in the new coordinates. So if we take $N > 4$ it suffices to
solve the transport equation up to a sufficiently high order of $x$,
then we may cut off where $|x|\ls 1$, which corresponds to $|x| \ls
{\varrho}^{-2}$ in the original coordinates.
Thus we expand in $x$:
\begin{equation}\label{phiexp}
 {\phi}_k(t,x,y) =
 \sum_{k,{\alpha}}{\phi}_{k,{\alpha}}(t,y)x^{\alpha} \qquad
 {\phi}_{k,{\alpha}}(t,y) \in C^\infty_0
\end{equation}
$A_0(t,x/{\varrho}^2)D_x = 
\sum_{{\alpha},j}
A_{0,{\alpha},j}(t){\varrho}^{-2|{\alpha}|} x^{\alpha} D_{x_j}$, 
$$
A_j(t,x/{\varrho}^2,y/{\varrho}) = \sum _{\alpha} 
A_{j,{\alpha}}(t,y/{\varrho}){\varrho}^{-2|{\alpha}|} 
x^{\alpha}\qquad j > 0
$$
and
\begin{equation}\label{rkexp}
 R_k(t,x/{\varrho}^2,y/{\varrho},{\varrho}D_y, {\varrho}^2D_x) =
 \sum_{{\alpha},\ell,{\nu},{\mu}}
 R_{k,{\alpha},\ell,{\nu},{\mu}}(t,y/{\varrho})   
{\varrho}^{-2|{\alpha}|   +2|{\nu}| + |{\mu}|}
x^{\alpha}D_t^\ell D_x^{{\nu}}D_y^{\mu} 
\end{equation}
Here $\ell + |{\nu}| + |{\mu}| \le k + 2$ so we have at most the
factor ${\varrho}^{2|{\nu}|+ |{\mu}|} \le {\varrho}^{2k + 4}$
in~\eqref{rkexp}. When $k = 0$ we have $\ell + |{\nu}| +
|{\mu}| = 0$ and $R_0(t,x/{\varrho}^2,y/{\varrho}) = \sum _{\alpha}  
R_{0,{\alpha}}(t,y/{\varrho}){\varrho}^{-2|{\alpha}|} 
x^{\alpha}$. 
Observe that the coefficients in the expansions are given by
expansions in powers of $f(t,0,{\xi}(t))$. 
After cut-off in $x$ we find in the original coordinates that
${\phi}_k(t,x,y) = {\varphi}_{k}(t, 
{\varrho}^2x, {\varrho} y)$ where ${\varphi}_{k}$ for any $t$  
is uniformly bounded in $C^\infty_0$. 

We shall first apply~\eqref{trexp0} on $ \phi_0 $ and expand in~$x$. Then we find  
that the terms that are independent of ~$x$ are
\begin{multline}\label{rawtrans}
D_t {\phi}_{0,0}  -i {\varrho}^2 \sum_j A_{0,0,j}(t){\phi}_{0,e_j} +
{\varrho}A_{1,0}(t,y/{\varrho}) D_y{\phi}_{0,0} \\ + {\varrho}^2
A_{2,0}(t,y/{\varrho})D_y^2{\phi}_{0,0} +  R_{0,0}(t,y/{\varrho}){\phi}_{0,0} 
\end{multline} 
We shall need the following result, which gives
estimates on $f$ and~$A_j$
on the interval of integration. It will be proved in the next section. In the following, we shall
denote $f(t) = f(t,0,{\xi}_0(t))$ and $F(t) = \int_0 ^t f(s)\,ds$.
Observe that $f(0) = 0$ since $\im w_0'(0) = 0$.

\begin{lem}\label{intrem} 
Assume that the conclusions in Proposition~\ref{eikprop} hold and
that~\eqref{cond2a} holds if $t \mapsto f(t)$ vanishes of infinite 
order at $0$. Then there exists ${\varepsilon}$ and $C \ge 1$ with the property that if
$N \ge C$, $\varrho = {\lambda}^{1/N} \ge C$ and
\begin{equation}\label{intbound}
|f(t)| + \left| \int_0^t|A_{0}(s,0)| +
|A_{1}(s,0,y/{\varrho})| + \mn{A_{2}(s,0,y/{\varrho})}\,ds \right|
\ge C/{\varrho}^{3}  
\end{equation}
holds for some $|y| \le {\varrho}/C$, then $\lambda F(s) \le
- {\lambda}^{\varepsilon }/C$ for some $s$ in the interval
connecting $0$ and $t$.
\end{lem}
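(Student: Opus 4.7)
The plan is to argue by contraposition: suppose $\lambda F(s) > -\lambda^\varepsilon/C$ for every $s$ on the interval joining $0$ and $t$, and show that the left-hand side of~\eqref{intbound} is strictly less than $C/\varrho^3$. Since $t_0=0$ is chosen as the local minimum of $\im w_0$ with $\im w_0(0)=0$, and $f(0)=0$ with $f$ changing sign from $+$ to $-$ at $0$ (as in Proposition~\ref{eikprop}), $F(0)=0$ is a local maximum and $F\le 0$ nearby. The assumption therefore reads $|F(s)| < \lambda^{\varepsilon-1}/C$ uniformly on the interval, and the task becomes to bound the two terms of~\eqref{intbound} by a power of $\sup|F|$.

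For the \emph{finite-order case}, the analysis preceding Proposition~\ref{eikprop} gives $f(t) = -t^k c(t)$ with odd $k \ge 1$ and $c(0) > 0$, hence $|f(t)| \asymp |t|^k$ and $|F(t)| \asymp |t|^{k+1}$. The symbols $A_0,A_1,A_2$ are smooth near the compact set $\Gamma$ and uniformly bounded for $|y/\varrho|\le 1/C$, so
\[
\int_0^t\bigl(|A_0(s,0)| + |A_1(s,0,y/\varrho)| + \mn{A_2(s,0,y/\varrho)}\bigr)\,ds \ls |t| \ls |F(t)|^{1/(k+1)}.
\]
Combining this with $|f(t)| \ls |F(t)|^{k/(k+1)}$, and noting that the smaller exponent $1/(k+1)$ dominates when $|F(t)|<1$, the left-hand side of~\eqref{intbound} is $\ls |F(t)|^{1/(k+1)} \le (\lambda^{\varepsilon-1}/C)^{1/(k+1)}$. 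Any $N$ with $3/N < (1-\varepsilon)/(k+1)$ then makes this strictly less than $C/\varrho^3$.

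In the \emph{infinite-order case}, condition~\eqref{cond2a} supplies $|A_1|+\mn{A_2}\ls |f|^{\varepsilon_0}$ near $\Gamma$ for some $\varepsilon_0>0$. For $A_0(s,0) = i\partial_\xi f(s,0,\xi_0(s)) - x_0'(s)$, the bound $|df|\ls |f|^{\varepsilon_0}$ handles the first term, while~\eqref{2} combined with $|df|,|A|\ls |f|^{\varepsilon_0}$ gives $|x_0'|\ls |f|^{\varepsilon_0}$, yielding $|A_0(s,0)| \ls |f(s)|^{\varepsilon_0}$. Differentiating $t\mapsto f(t,0,\xi_0(t))$ and using $|\partial_t f|,|\partial_\xi f|,|\xi_0'|\ls |f|^{\varepsilon_0}$ produces $|f'(t)|\ls |f(t)|^{\varepsilon_0}$, so from $f(0)=0$ and H\"older's inequality,
\[
|f(t)| \le \int_0^t|f'(s)|\,ds \ls \int_0^t|f(s)|^{\varepsilon_0}\,ds \ls |t|^{1-\varepsilon_0}|F(t)|^{\varepsilon_0} \ls |F(t)|^{\varepsilon_0},
\]
since $|t|$ stays bounded near $\Gamma$. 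The same H\"older step bounds the integral in~\eqref{intbound} by $|F(t)|^{\varepsilon_0}$, so the whole left-hand side is $\ls |F(t)|^{\varepsilon_0} \le (\lambda^{\varepsilon-1}/C)^{\varepsilon_0}$. Taking $\varepsilon=\varepsilon_0/2$ and $N$ with $3/N<\varepsilon_0(1-\varepsilon)$ closes the estimate.

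The main obstacle is the inequality $|f(t)|\ls |F(t)|^{\varepsilon_0}$ in the infinite-order case: without the simultaneous control of $|df|$ and $|\partial_t f|$ by $|f|^{\varepsilon_0}$ supplied by~\eqref{cond2a}, the pointwise value of $f$ could far exceed its average, and the $C/\varrho^3$ gain from the transport equation would not translate into a comparable drop in $F$. This is precisely the role played by condition~\eqref{cond2} in Theorem~\ref{mainthm}.
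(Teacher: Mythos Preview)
Your finite-order case is correct and matches the paper's argument (phrased as a contraposition rather than directly, but with the same substance: $|f|\asymp |t|^k$, $|F|\asymp |t|^{k+1}$, and the $A_j$-integral is $\ls |t|$).

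In the infinite-order case there is a genuine gap. The H\"older step you write as
\[
\int_0^t |f(s)|^{\varepsilon_0}\,ds \ \ls\ |t|^{1-\varepsilon_0}\,|F(t)|^{\varepsilon_0}
\]
actually yields $|t|^{1-\varepsilon_0}\bigl(\int_0^t |f(s)|\,ds\bigr)^{\varepsilon_0}$, and you are silently identifying $\int_0^t |f|$ with $|F(t)|=\bigl|\int_0^t f\bigr|$. These agree only when $f$ has constant sign on $[0,t]$. Nothing in the setup excludes sign changes of $t\mapsto f(t,0,\xi_0(t))$ on $[0,t]$: the minimal-bicharacteristic reduction and Proposition~\ref{eikprop} give only $F\le 0$ with $F(0)=0$ a local maximum and $f(0)=0$, and in the case $\partial_t f\ge 0$ on $f^{-1}(0)$ the map $t\mapsto f$ may well oscillate. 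If $f$ changes sign, $\int_0^t |f|$ is the total variation of $F$ and can vastly exceed $\sup_{[0,t]}|F|$; your contraposition hypothesis bounds only the latter. The differential inequality $|f'|\ls |f|^{\varepsilon_0}$ you derive is correct (and is equivalent to $|f|^{1-\varepsilon_0}$ being Lipschitz), but Lipschitz functions can have infinitely many zeros on a bounded interval, so it does not by itself limit the number of sign changes or the total variation.

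The paper closes this gap with the auxiliary Lemma~\ref{intlem}, proved separately: if $F\le 0$ has a local maximum $0$ at $t=0$, $\max_{[0,t_0]}|F'|=|F'(t_0)|=\kappa$, and $|t_0|\ge \kappa^{\varrho}$, then $\min_{[0,t_0]}F\le -C_\varrho\kappa^{1+\varrho}$. The point is that smoothness of $F$ (through a Taylor expansion and equivalence of norms on polynomials) forces $F$ to dip by a definite amount once $|F'|$ reaches size $\kappa$, \emph{regardless of sign changes of $f$}. In the proof of Lemma~\ref{intrem} one takes $t_0$ to be the first time $|f(t_0)|\gs\kappa:=\lambda^{-3/\varepsilon N}$ (which exists by~\eqref{cond2a} once any $A_j$ or $|f|$ reaches $c\varrho^{-3}$), uses $|f(t_0)|\le C|t_0|$ to check the side condition, and then Lemma~\ref{intlem} produces $\min F\ls -\kappa^{1+1/M}$, which is the desired bound. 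Your contrapositive can be completed along the same lines, but not without this lemma or an equivalent device; the H\"older argument alone does not suffice.
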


Observe that if Lemma~\ref{intrem} holds for some ${\varepsilon}$ and
$C$, then it trivially holds for smaller~${\varepsilon}$ and
larger~$C$.  We shall assume that $ \varepsilon < 1 $ and that both~ $N$
and~${\lambda}$ are large enough so that the conclusion in
Lemma~\ref{intrem} holds. Since~\eqref{intbound} does not hold when
$t=0$, we can choose the maximal interval ~$I$
containing $0$ such that~\eqref{intbound} does not hold in~$I$, thus 
\begin{equation}\label{intbound0}
|f(t)| + \left| \int_0^t|A_{0}(s,0)| +
|A_{1}(s,0,y/{\varrho})| + \mn{A_{2}(s,0,y/{\varrho})}\,ds \right|
< C/{\varrho}^{3}  \qquad t \in I 
\end{equation}
when $ |y| \le \varrho/C $.
By definition we obtain that~\eqref{intbound} holds for some 
$ |y| \le \varrho/C $ when  $t \in \partial I$  so Lemma~\ref{intrem} gives that ${\lambda}F \ls -
{\lambda}^{{\varepsilon}}$ at $\partial I_0$ for some open interval 
$I_0 \subseteq I $ that contains 0.
This means that  $e^{i{\lambda}{\omega}(t,0)}  =
e^{{\lambda}F(t)} \le  C_N{\lambda}^{-N}$ for any $N$ at $\partial I_0$ when
${\lambda} \gg 1$. Since $F' = f$ is uniformly bounded and 
the left hand side of~\eqref{intbound0} is Lipschitz continuous, 
we may cut off near $I_0 $ with ${\chi}(t) \in S(1,
{\lambda}^{6/N}dt^2) \subset S(1,
{\lambda}^{2-2{\varepsilon}}dt^2) $  for $N \gg 1$ so that $ \chi(0) \ne 0 $,
${\lambda}F(t) \ls -{\lambda}^{\varepsilon}$ in $ \supp \chi' $ and ~\eqref{intbound0} 
holds with some $ C $ when $t \in \supp \chi $ and $ |y| \le \varrho/C $.
Then as before the  cut-off errors can be
absorbed by the exponential and the expansion in powers of $ f(t,0,\xi(0)) = f(t) $ is justified.
In fact, $f(t) = \Cal O({\varrho}^{-3})$  in $ \supp \chi$,
which gives errors of any negative power of
${\varrho} = {\lambda}^{1/N}$. The bound on the integral
in~\eqref{intbound0} means that 
we can ignore the $A_j$ terms in~\eqref{rawtrans}  in $ \supp \chi$
modulo lower order terms in~${\varrho}$. In the following we shall change the notation and let $I = \supp \chi $. We need to measure the error terms in the following way.

\begin{defn}
For $ a(t) \in L^\infty(\br) $ and $ \kappa > 0 $ we say that $a(t) \in I({\kappa})$ if the integral $\int_0^t
a(s)\,ds = \Cal O ({\kappa})$ for all $t \in I$.
\end{defn} 

For example, $ f(t) \in   I(\varrho^{-3})$ and since the integral in~\eqref{intbound0} is 
$ \Cal O (\varrho^{-3})$ in $ I $ the integrand is in $  I(\varrho^{-3}) $.
Then according to~\eqref{intbound0} it suffices to solve   
\begin{equation}\label{6.12}
 D_t{\phi}_{0,0} =  -R_{0,0} {\phi}_{0,0}  \qquad t \in I 
\end{equation}
to obtain that the terms in~\eqref{rawtrans} are
in~$I({\varrho}^{-1})$, here $R_{0,0}(t,y/{\varrho}) \in C^\infty$ uniformly 
since ${\varrho} \ge 1$. Now we can solve~\eqref{6.12} with
${\phi}_{0,0}(0,y) = {\phi}(y) \in C_0^{\infty}$ uniformly  
with support where $ |y| \ll 1 $ such that ${\phi}(0) = 1$. In fact, the solution is ${\phi}_{0,0}(t,y)
= E(t,y){\phi}(y)$, where 
$$
E(t,y) = \exp\left(-i\int^t_0 R_{0,0}(s,y/{\varrho})\,ds\right)
\qquad t \in I   
$$
is uniformly bounded in $C^\infty$.
Thus ${\phi}_{0,0}(t,y) \in C^\infty$ uniformly and by choosing $ {\phi}(y) $
with sufficiently small support we obtain for any $t \in I$ 
that $  {\phi}_{0,0}(t,\cdot)$ has support in a
sufficiently small compact set in which~\eqref{intbound0} holds.

The coefficients of the terms in~\eqref{trexp0} which are
homogeneous in $x$ of degree~${\alpha} \ne 0$ in $x$ are
\begin{multline}\label{6.13}
D_t {\phi}_{0,{\alpha}} + R_{0,0}(t,y/{\varrho}){\phi}_{0,{\alpha}} -i
\sum_{\substack{|\beta| = 1\\j}} 
A_{0,{\beta},j}(t)({\alpha}_j + 1- {\beta}_j){\phi}_{0,{\alpha}+e_j
  -{\beta}} \\
+ \sum_{|\beta| = 1}A_{2,{\beta}}(t,y/{\varrho})
D_y^2{\phi}_{0,{\alpha}-{\beta}} 
\end{multline}
modulo~$I({\varrho}^{-1})$. Letting ${\Phi}_{k,j} 
= \set{{\phi}_{k,{\alpha}}}_{|{\alpha}| = j}$ and ${\Phi}_{k} =
\set{{\Phi}_{k,j}}_{j}$  for $k,\,j \ge 0$, we find that~\eqref{6.13}
vanishes if $ {\Phi}_{0}$ satisfies the system
\begin{equation}\label{0ktran}
 D_t {\Phi}_{0,k}   =   S_{0,0}^k{\Phi}_{0,k} +
  S_{0,1}^k{\Phi}_{0,k-1} 
\end{equation}
where $S_{0,0}^k(t)$ is a uniformly bounded matrix
depending on $t$, and $S_{0,1}^k(t,y/{\varrho},D_y)$ is a system of uniformly
bounded differential operators of order 2 in $y$ when $|y| \ls
{\varrho}$.
Let $E_{0,k}(t)$ be the fundamental solution to $D_t E_{0,k} =  S_{0,0}^k
E_{0,k}$ so that $E_{0,k}(0) = \id$, then letting ${\Phi}_{0,k}(t,y) =
E_{0,k}(t){\Psi}_{0,k}(t,y)$ the system~\eqref{0ktran} reduces to 
\begin{equation*}
 D_t {\Psi}_{0,k}(t,y) = E_{0,k}^{-1}S_{0,1}^k E_{0,k} {\Psi}_{0,k-1}(t,y)
\end{equation*}
This is a recursion equation which we can solve uniformly in~$I$ with
${\Psi}_{0,k}(t,y)$ having initial values
${\Psi}_{0,k}(0,y) \equiv 0$ for $0 < k \le M$. Observe that since the initial
data~${\Phi}_{0,k}(0,y)$ has  
compact support, we find that ${\Phi}_{0,k}(t,y)\in C^\infty$ uniformly.  For any~$ t $ we find that ${\Phi}_{0,k}(t,y)$ has support in a
sufficiently small compact set so that~\eqref{intbound0} holds for any $t \in I$.

We shall now apply~\eqref{trexp0} to $ \phi  $ given by the full expansion~\eqref{phiexp}. 
We find that the coefficients of the terms  in~\eqref{trexp0} which are homogeneous
of degree~${\alpha} \ne 0$ in~$x$ are equal to
\begin{multline} \label{6.15}
{\varrho}^{-1}\big(D_t {\phi}_{1,{\alpha}}   +  
R_{0,0}(t,y/{\varrho}){\phi}_{1,{\alpha}} - i
\sum_{\substack{|\beta| = 1\\j}} A_{0,{\beta},j}(t)({\alpha}_j + 1-
{\beta}_j){\phi}_{1,{\alpha}+e_j 
  -{\beta}} \\ 
+ \sum_{|\beta| = 1}A_{2,{\beta}}(t,y/{\varrho})
D_y^2{\phi}_{1,{\alpha}-{\beta}}  + \sum_{|\beta| =
  1}A_{1,{\beta}}(t,y/{\varrho}) D_y^2{\phi}_{0,{\alpha}-{\beta}} 
\\
 - i{\varrho}^3 \sum_j A_{0,0,j}(t)({\alpha}_j+1)
 {\phi}_{0,{\alpha}+e_j} 
+ {\varrho}^3 A_{2,0}(t,y/{\varrho})D_y^2{\phi}_{0,{\alpha}}  \big)
\end{multline}
modulo $I({\varrho}^{-2})$. We find that~\eqref{6.15} vanishes
if~${\Phi}_1$ satisfies the system 
\begin{equation}\label{tr1}
 D_t {\Phi}_{1,k} =  S_{1,0}^k{\Phi}_{1,k} +
 S_{1,1}^k{\Phi}_{1,k-1}  +  \mathbb A_1^0 {\Phi}_{0}
\end{equation}
where $S_{1,0}^k(t)$ is a uniformly bounded matrix
depending on $t$, $S_{1,1}^k(t,y/{\varrho},D_y)$ is a system of uniformly
bounded differential operators of order 2 when $|y| \ls
{\varrho}$ and $\mathbb A_1^0$ is a differential operator in~$y$ of
order~$2$ with coefficients in~$I(1)$ because of~\eqref{intbound0}. 
By letting ${\Phi}_{1,k} = E_{1,k} {\Psi}_{1,k}$ with the fundamental
solution $E_{1,k}$ to $D_tE_{1,k} 
=  S_{1,0}^kE_{1,k}$, $E_{1,k}(0) =  \id  $, this reduces to the equation 
$$
D_t {\Psi}_{1,k} =  E_{1,k}^{-1}
S_{1,1}^k E_{1,k-1}{\Psi}_{1,k-1}  + 
E_{1,k}^{-1}\mathbb A_1^0 {\Phi}_{0}
$$ 
Thus we can solve \eqref{tr1} in~$I$ recursively with
uniformly bounded ${\Phi}_{1,k}$ having initial values
${\Phi}_{1,k}(0,y) \equiv 0$, $k \ge 0$. But observe that $ {\Phi}_1  $ is not in
$ C^\infty $ uniformly, instead we have $D_t^j  
{\Phi}_1 = \Cal O({\varrho}^3)$ if $j \ge 1$, since
$|\partial_t^j \mathbb A_1^0| \le C_j{\varrho}^3$, $\forall\,
j$ by~\eqref{6.15}. For that reason, we shall define $ S^3_{\varrho} \subset C^\infty $ by
 \begin{equation}
 |\partial_t^j \partial_y^\alpha{\phi}(t,y)| \le C_{j,\alpha} {\varrho}^{3j} \quad \forall\, j \,\alpha
 \end{equation}
when ${\phi} \in S^3_{\varrho}$.
Observe that ${\phi} \in S^3_{\varrho}$ if and only if ${\phi}(t,y) =
{\chi}({\varrho}^3t,y)$ where ${\chi} \in C^\infty$ uniformly, and that the
operator ${\varrho}^{-3}D_t$ maps~$ S^3_{\varrho}  \mapsto
S^3_{\varrho}$.
Note that the expansion of the symbols also contains terms with factors
${\varrho}^3f^k$, $k \ge 1$, which are uniformly bounded in
$S^3_{\varrho}$ for~$t \in I$ by~\eqref{intbound0}.
Since $\int^t_0 \mathbb A_1^0\,dt \in S^3_{{\varrho}}$ in~$I$ we find
that ${\Phi}_1 \in S^3_{\varrho}$ in~$I$.

Recursively, the coefficients of the terms in~\eqref{trexp0}
that are homogeneous in $x$ of degree~
${\alpha}$ are
\begin{multline}\label{trexp}
{\varrho}^{-k} \big(D_t {\phi}_{k,{\alpha}} 
-i \sum_{{\beta}\ne 0} A_{0,{\beta},j}(t)({\alpha}_j + 1-{\beta}_j){\phi}_{k + 
  2 - 2|{\beta}|,{\alpha} + e_j - {\beta}} 
\\  
+ \sum_{{\beta}\ne 0} A_{1,{\beta}}(t,y/{\varrho})D_y{\phi}_{k + 1 -
2|{\beta}|,{\alpha} - 
{\beta}}
+ \sum_{{\beta}\ne 0} A_{2,{\beta}}(t,y/{\varrho})D_y^2{\phi}_{k
  + 2 - 2|{\beta}|,{\alpha} - 
  {\beta}} 
\\-i {\varrho}^3\sum_j
A_{0,0,j}(t)({\alpha}_j + 1){\phi}_{k-1,{\alpha} + e_j} +
{\varrho}^3 A_{1,0}(t,y/{\varrho})D_y{\phi}_{k-2,{\alpha}}
\\  + {\varrho}^3 A_{2,0}(t,y/{\varrho})D_y^2{\phi}_{k-1,{\alpha}} 
\\ + \sum_{\ell +  |{\nu}| +  |{\mu}| \le j + 2}
 {\varrho}^{-jN} R_{j,{\beta},\ell,{\nu},{\mu}}(t,y/{\varrho})
 c_{{\alpha},{\beta},{\nu}} 
{\varrho}^{-2|{\beta}|   +2|{\nu}| + |{\mu}| +i +3\ell}({\varrho}^{-3}
D_t)^\ell D_{y}^{\mu}{\phi}_{k-i, {\alpha} + {\nu} - {\beta}} \big)
\end{multline}
modulo $I({\varrho}^{-k-1})$. Here the last sum has 
 $\ell +  |{\nu}| +  |{\mu}| = 0$ when  $j = 0$, $({\varrho}^{-3} 
D_t)^\ell D_y^{\mu}$ maps $S^3_{\varrho} \mapsto S^3_{\varrho}$ and
the values of the symbols are given by a finite expansion in powers of~$f(t)$.

Since ${\phi}_j \in S^3_{\varrho}$
we obtain that the terms in~\eqref{trexp} are in~$I({\varrho}^{-k-1})$
if 
\begin{multline}\label{trexpk}
D_t {\phi}_{k,{\alpha}} 
-i \sum_{{\beta}\ne 0}
A_{0,{\beta},j}(t)({\alpha}_j + 1-{\beta}_j){\phi}_{k + 
  2 - 2|{\beta}|,{\alpha} + e_j - {\beta}} 
\\ 
 + \sum_{{\beta}\ne 0} A_{1,{\beta}}(t,y/{\varrho})D_y{\phi}_{k + 1 -
2|{\beta}|,{\alpha} - 
{\beta}}
+ \sum_{{\beta}\ne 0} A_{2,{\beta}}(t,y/{\varrho})D_y^2{\phi}_{k
  + 2 - 2|{\beta}|,{\alpha} - 
  {\beta}} 
\\ -i {\varrho}^3\sum_j
A_{0,0,j}(t)({\alpha}_j + 1){\phi}_{k-1,{\alpha} + e_j} +
{\varrho}^3 A_{1,0}(t,y/{\varrho})D_y{\phi}_{k-2,{\alpha}}
\\  + {\varrho}^3 A_{2,0}(t,y/{\varrho})D_y^2{\phi}_{k-1,{\alpha}} 
\\ = - \sum_{\substack {i + 3\ell + 2|{\nu}|+ |{\mu}|  = jN + 2|{\beta}|
    \\ \ell +  |{\nu}| +  |{\mu}| \le j + 2}}
 R_{j,{\beta},\ell,{\nu},{\mu}}(t,y/{\varrho})
 c_{{\alpha},{\beta},{\nu}}({\varrho}^{-3}
D_t)^\ell D_{y}^{\mu} {\phi}_{k-i, {\alpha} + {\nu} - {\beta}}
\end{multline}
When $j = 0$ we find that
$\ell +  |{\nu}| +  |{\mu}| = 0$, $ i =2|\beta| $ and we only have an expansion 
in~${\beta}$  in the last sum.
Now if $ j > 0 $, $\ell +  |{\nu}| +  |{\mu}| \le j + 2$ and
$
i + 3\ell + 2|{\nu}|+ |{\mu}| = jN + 2|{\beta}|
$ 
then we find that 
$$
jN \le i + 3\ell + 2|{\nu}| + |{\mu}| < i + 3(j+2)
$$ 
which gives $ i  \ge j(N-3) - 6 \ge N - 9 \ge 1$
if  $N \ge 10$. 
Thus we find that~\eqref{trexpk} can be written as
\begin{equation}\label{transeq}
 D_t {\Phi}_k = \mathbb A_0^k{\Phi}_k + \mathbb A_1^k {\Phi}_{k-1}
 + \mathbb A_{2}^k {\Phi}_{k-2} +  \dots
\end{equation}
where $\int_0^t \mathbb A_{j}^k \,dt$ is a uniformly bounded
differential operator on $S^3_{\varrho}$ for~$t \in I$ and $j > 0$. We
have that 
$$
\set{\mathbb A_0^k{\Phi}_k}_{j} =
S_{k,0}^j{\Phi}_{k,j} + S_{k,1}^j{\Phi}_{k,j-1} 
$$ 
where $S_{k,0}^j(t)$ is a uniformly bounded matrix
depending on $t$, and $S_{k,1}^j(t,y/{\varrho},D_y)$ is a system of
uniformly bounded differential operators of order~$2$ when $|y| \ls
{\varrho}$. 
By letting ${\Phi}_{k,j} = E_{k,j} {\Psi}_{k,j}$  with the fundamental 
solution $E_{k,j}  $ to $D_t E_{k,j} = S_{k,0}^j E_{k,j}$, $ E_{k,j}(0) = \id $, 
\eqref{transeq} becomes
a system of recursion equations in~$j$ and~$k$.
Thus~\eqref{transeq} can be solved in~$I$ with ${\Phi}_k \in S^3_{\varrho}$ having
initial values ${\Phi}_k(0) \equiv 0$, $k > 0$. We find from
\eqref{phiexp} and the definition of   
$S^3_{\varrho}$ that 
$
{\phi}_k(t,x,y) =  {\varphi}_k({\varrho}^3t,
{\varrho}^2x, {\varrho}y)
$
where ${\varphi}_k \in C^\infty$ uniformly when $ t \in I $. 
Thus we can solve the transport equation~\eqref{transexp} up to any
negative power of~${\lambda}$. Observe that by cutting off
in ~$t$ and~$x$ we may assume that ${\varphi}_k \in 
C_0^\infty$ has fixed compact support in $(x,y)$ and
support where $|t| \ls {\varrho}^3$.
It follows that the support of ${\phi}_k$ can be chosen in an
arbitrarily small 
neighborhood of~ ${\Gamma}$ for large enough~${\lambda}$.
Changing to the original coordinates, we obtain the
following result. 

\begin{prop}\label{transprop}
Assume that the conclusions in Proposition~\ref{eikprop} hold, and
that~\eqref{cond2a} is satisfied near~${\Gamma}$ when the sign change
 of $t \mapsto f(t,x_0,{\xi}_0)$ is of infinite order.
If ${\varrho} = {\lambda}^{1/N}$ for sufficiently large~$N$, then for
any $K$ and $M$ we can solve the transport 
equations~\eqref{trexpk} for $k \le K$ and $|{\alpha}| \le M$
near $\set{(t,x_0(t),y_0):\ t \in [t_1, t_2]}$.
By~\eqref{phiexp} this gives
$$
{\phi}_k(t,x,y) = {\varphi}_k({\varrho}^3(t-t_0),
{\varrho}^2(x-x_0(t)), {\varrho}(y-y_0))
\qquad k \le K
$$
where ${\varphi}_k(t,x,y)\in C^\infty$ uniformly, has support
where $ |x| + |y| \ls 1 $ and $ |t|\ls \varrho^3 $, and 
${\varphi}_0(0,0,0) = 1$ for some $t_0 
\in (t_1,t_2)$ such that\/ $\im w_0(t_0) = 0$.
\end{prop}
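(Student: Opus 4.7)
The plan is to execute the strategy laid out in the text preceding the proposition in three stages: normalization and rescaling, restriction to a good subinterval via Lemma~\ref{intrem}, and a double recursion solving the transport system. First I would translate coordinates so that $t_0 = 0$, $x_0(t) \equiv 0$, and $y_0 = 0$; since $D_t \mapsto D_t - x_0'(t)D_x$, the orders of the $R_j$ as differential operators are preserved. I would then apply the rescalings $y \mapsto y/\varrho$ and $x \mapsto x/\varrho^2$ to arrive at \eqref{trexp0}, and expand the amplitude as in \eqref{phidef}--\eqref{phiexp}, i.e.\ $\phi = \sum_k \varrho^{-k}\phi_k$ with $\phi_k = \sum_\alpha \phi_{k,\alpha}(t,y)x^\alpha$. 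The coefficients $A_0, A_1, A_2, R_j$ admit finite Taylor expansions in $x$ and in $f(t) = f(t,0,\xi_0(t))$, whose truncation errors will be absorbed by the Gaussian decay of $e^{i\lambda\omega}$ in $x$.

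Second, I would use Lemma~\ref{intrem} to select the maximal subinterval $I$ of $0$ on which \eqref{intbound0} holds. The lemma supplies $I_0 \subseteq I$ at whose boundary $\lambda F(t) \ls -\lambda^{\varepsilon}$, so $e^{i\lambda\omega(t,0)} = e^{\lambda F(t)}$ is smaller than any power of $\lambda$ there. Introducing a cut-off $\chi(t) \in S(1,\lambda^{6/N} dt^2)$ supported in $I$ with $\chi(0) \ne 0$ and $\lambda F \ls -\lambda^{\varepsilon}$ on $\supp \chi'$ then absorbs the cut-off and the $x$-Taylor truncation errors into any negative power of $\lambda$, and legitimises treating the $A_j(s,0,y/\varrho)$-terms as perturbations on $\supp \chi$, since the integral in \eqref{intbound0} is $\Cal O(\varrho^{-3})$ there.

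The third and main stage is the recursive solution of \eqref{trexpk}, which assembles into the system \eqref{transeq}: $D_t \Phi_k = \mathbb A_0^k \Phi_k + \mathbb A_1^k \Phi_{k-1} + \dots$, with leading part splitting as $S_{k,0}^j \Phi_{k,j} + S_{k,1}^j \Phi_{k,j-1}$. I would diagonalise by introducing the fundamental solution $E_{k,j}$ of $D_t E_{k,j} = S_{k,0}^j E_{k,j}$ with $E_{k,j}(0)=\id$, then solve by double recursion in $j$ (powers of $x$) and in $k$, using $\phi(y) \in C_0^\infty$ with $\phi(0) = 1$ and small support for the initial datum of $\phi_{0,0}$ and zero initial data elsewhere. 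The key arithmetic point is that the constraint $i + 3\ell + 2|\nu| + |\mu| = jN + 2|\beta|$ combined with $\ell + |\nu| + |\mu| \le j+2$ forces $i \ge j(N-3) - 6 \ge 1$ whenever $j > 0$ and $N \ge 10$, so only strictly previously solved $\phi_{k-i}$ appear on the right-hand side and the recursion is well-founded.

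The main obstacle is controlling time derivatives: since $\mathbb A_1^0$ has coefficients of size $\varrho^3$, the $\Phi_k$ cannot be uniformly smooth in $t$. The remedy, as indicated by the text, is to work throughout in the symbol class $S^3_\varrho$ defined by $|\partial_t^j \partial_y^\alpha \phi| \le C_{j,\alpha}\varrho^{3j}$, in which $\varrho^{-3}D_t$ is admissible and the factors $\varrho^3 f^k$ are uniformly bounded on $I$ by \eqref{intbound0}. A straightforward induction on $k$ then propagates the $S^3_\varrho$ bound through the recursion, which in the original coordinates yields the asserted scaling $\phi_k = \varphi_k(\varrho^3(t-t_0),\varrho^2(x-x_0(t)),\varrho(y-y_0))$ with $\varphi_k \in C^\infty$ uniformly. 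Cutting off $\varphi_k$ in $(x,y)$ and in the rescaled~$t$ (where $|t| \ls \varrho^3$) produces fixed compact support, and for $\lambda \gg 1$ localises $\supp \phi_k$ in an arbitrarily small neighbourhood of $\Gamma$, completing the construction.
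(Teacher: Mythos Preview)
Your proposal is correct and follows essentially the same approach as the paper: the argument you outline is precisely the construction carried out in the text of Section~6 leading up to the proposition, including the coordinate normalization, the rescalings to~\eqref{trexp0}, the use of Lemma~\ref{intrem} to restrict to the interval where~\eqref{intbound0} holds, the double recursion via the fundamental solutions~$E_{k,j}$, the arithmetic check forcing $i\ge 1$ for $N\ge 10$, and the introduction of the class~$S^3_\varrho$ to handle the loss of uniform $t$-smoothness.
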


\section{The Rate of Change of Sign}

We have showed that $t \mapsto  f(t,x,{\xi})$ changes sign from $+$ to
$-$ on an interval $I$. Then   
\begin{equation}
 F(t) = \int^t f(s,x_0(s),{\xi}_0(s)) \,ds = \int^t f(s)\,ds
\end{equation}
has a local maximum in the interval. By choosing that maximum as
the starting point we may assume it is equal to $0$ so that $F(t) \le 0$. By
changing $t$~ coordinate, we may assume $F(0) = 0$. We shall study how the
size of the derivative $f$ affects the size of the function $F$.

\begin{lem}\label{intlem}
Assume that $0 \ge F(t) \in C^\infty$ has local maximum at $t = 0$,
and let $I_{t_0}$ be the closed interval
joining $0$ and $t_0\in \br$. If 
$$\max_{I_{t_0}}|F'(t)| = |F'(t_0)| = {\kappa}\le 1$$ 
with $|t_0| \ge {\kappa}^{\varrho}$ for some ${\varrho} > 0$,
then we have $\min_{I_{t_0}}  F(t) \le -
C_{\varrho}{\kappa}^{1+{\varrho}}$.
The constant $C_{\varrho}> 0$ only depends on
${\varrho}$ and the bounds on $F$ in $C^\infty$.
\end{lem}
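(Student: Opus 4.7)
By reflecting if necessary (replace $F(t)$ by $F(-t)$) we may assume $t_0 > 0$. Set $M := \max_{[0,t_0]}(-F) \ge 0$, so that $M = -\min_{I_{t_0}} F$, and the goal becomes to show $M \ge C_\varrho \kappa^{1+\varrho}$. The main tool will be a Landau--Kolmogorov type interpolation inequality on the finite interval $[0,t_0]$: for every integer $n \ge 2$,
\begin{equation*}
\|F'\|_{L^\infty[0,t_0]} \le A_n\bigl( M^{1-1/n}\,\|F^{(n)}\|_\infty^{1/n} + M/t_0\bigr),
\end{equation*}
with $A_n$ depending only on $n$. This is the classical Gorny--Kolmogorov inequality on a bounded interval, which can be proved by interpolating $F$ by a polynomial $P$ of degree $n-1$ through $n$ equally spaced nodes in $[0,t_0]$ and combining a Markov inequality for $P'$ with the Taylor remainder bound $\|F - P\|_\infty \le C_n t_0^n\|F^{(n)}\|_\infty$; alternatively one extends $F$ to $\br$ by Whitney extension (preserving the $C^n$ bounds up to a fixed constant) and applies the classical Landau--Kolmogorov inequality on the line, with the additive term $M/t_0$ accounting for the boundary loss.

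Using the uniform $C^\infty$ bounds on $F$ we have $\|F^{(n)}\|_\infty \le K_n$, so
\begin{equation*}
\kappa = |F'(t_0)| \le \|F'\|_{L^\infty[0,t_0]} \le A_n K_n^{1/n}\, M^{1-1/n} + A_n M/t_0,
\end{equation*}
and one splits into two cases according to which term on the right dominates. If $A_n K_n^{1/n} M^{1-1/n} \ge A_n M/t_0$ then $\kappa \le 2 A_n K_n^{1/n} M^{1-1/n}$, which rearranges to $M \ge C_n' \kappa^{n/(n-1)}$. Otherwise $\kappa \le 2 A_n M/t_0$, and the hypothesis $t_0 \ge \kappa^\varrho$ gives $M \ge \kappa t_0/(2 A_n) \ge \kappa^{1+\varrho}/(2 A_n)$.

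Finally one picks any integer $n$ with $n \ge 1 + 1/\varrho$, so that $n/(n-1) \le 1+\varrho$. Since $\kappa \le 1$, this forces $\kappa^{n/(n-1)} \ge \kappa^{1+\varrho}$, so the first case also yields $M \ge C_n' \kappa^{1+\varrho}$. Combining, $M \ge C_\varrho \kappa^{1+\varrho}$, with $C_\varrho$ depending only on $\varrho$ and on $K_n$ for $n = \lceil 1 + 1/\varrho \rceil$, i.e.\ on $\varrho$ and the $C^\infty$ bounds of $F$, exactly as stated. The only nontrivial ingredient is the Gorny--Kolmogorov inequality with the correct two-term structure on a finite interval; once it is in place, the remainder is a short case split designed precisely so that the choice $n \ge 1 + 1/\varrho$ absorbs the exponent $n/(n-1)$ arising from the interpolation into the target exponent $1+\varrho$.
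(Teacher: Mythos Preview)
Your argument is correct and conceptually close to the paper's, but packaged differently. The paper works directly with $f=F'$: it rescales $g(t)=\kappa^{-1}f(t_0+t\kappa^{\varrho})$ so that $|g(0)|=1$, $|g|\le 1$ on $[0,1]$, and $|g^{(N)}|\le C_N$ for $N\ge 1/\varrho$; then a Taylor expansion at $0$ together with the equivalence of norms on polynomials of degree $<N$ shows that the low-order Taylor coefficients of $g$ are uniformly bounded, hence $g$ is uniformly $C^1$ and stays $\ge 1/2$ on an interval $[0,\delta]$ of uniform length. Integrating $f$ over the corresponding interval of length $\delta\kappa^{\varrho}$ yields the drop $\ge \tfrac{\delta}{2}\kappa^{1+\varrho}$ in $F$. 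In other words, the paper proves the needed interpolation inequality from scratch, in rescaled variables, and extracts the conclusion by a direct integration.

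Your route instead invokes the Gorny--Kolmogorov inequality on $[0,t_0]$ as a black box and reads off $M\ge C_\varrho\kappa^{1+\varrho}$ by the two-case split with $n\ge 1+1/\varrho$. This is cleaner and isolates the analytic content in a named inequality, at the price of not being self-contained (your sketched proof via Lagrange interpolation plus Markov leaves the bound on $(F-P)'$ a little implicit; a safer justification is exactly the paper's device, namely Taylor remainder control on $F^{(n)}$ combined with norm equivalence on the finite-dimensional space of Taylor polynomials). The paper's hands-on argument also makes transparent \emph{where} the drop in $F$ occurs (near $t_0$, on an interval of length $\sim\kappa^{\varrho}$), which your global $\|F\|_\infty$ bound does not localize. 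Either way the choice of derivative order $n\sim 1/\varrho$ is the same key parameter.
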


\begin{proof} 
Let $f = F'$ then since $F(t) = F(0) + \int_0^t f(s)\,ds \le \int_0^t
f(s)\,ds$ it is no 
restriction to assume the maximum $F(0)=0$. By switching $t$ to $-t$
we may assume $t_0 \le -{\kappa}^{\varrho} < 0$. Let
\begin{equation}
 g(t) = {\kappa}^{-1}f(t_0 + t{\kappa}^{\varrho}) 
\end{equation}
then $|g(0)| = 1$, $|g(t)| \le 1$ for $0 \le t \le 1$ and 
$$|g^{(N)}(t)| = {\kappa}^{{\varrho}N
  -1}|f^{(N)} (t_0 + t{\kappa}^{\varrho})| \le C_N$$
when $N \ge 1/{\varrho}$  for $0 \le t \le 1$. By using the Taylor
expansion at $t= 0$ for $N \ge 1/{\varrho}$ we find 
\begin{equation}
 g(t) = p(t) + r(t)
\end{equation}
where $p$ is the Taylor polynomial of order $N-1$ of $g$ at $0$, and 
\begin{equation}
 r(t) = t^N \int_0^1g^{(N)}(ts)(1-s)^{N-1}\, ds/(N-1)! 
\end{equation}
is uniformly bounded in $C^\infty$ for $0 \le t \le 1$ and $ r(0) = 0 $. Since $g$
also is bounded on 
the interval, we find that $p(t)$ is uniformly bounded in $0 \le t \le 1$. Since
all norms on the finite dimensional space of polynomials of fixed
degree are equivalent, we find that $p^{(k)}(0) = g^{(k)}(0)$ are
uniformly bounded for $0 \le 
k < N$ which implies that $g(t)$ is uniformly bounded in $C^\infty$ for $0 \le t \le
1$. Since $|g(0)|= 1$ it exists a uniformly bounded ${\delta}^{-1} \ge
1$ such that  $|g(t)| \ge 1/2$
when $0 \le t \le {\delta}$, thus $g$ has the same sign in that
interval. Since $ g(s) = {\kappa}^{-1}f(t_0 +
s{\kappa}^{\varrho})$  we find
\begin{equation}
{\delta}/2 \le  \left|\int_0^{\delta} g(s)\,ds \right| = \left|
{\kappa}^{-\varrho}\int_{t_0}^{t_0 +
  \delta{\kappa}^{{\varrho}}} {\kappa}^{-1}f(t)\,dt \right| 
\end{equation}
Since $t_0 + {\delta}{\kappa}^{\varrho} \le 0$ we find that the variation of
$F(t)$ on $[t_0,0]$ is greater than ${\delta}{\kappa}^{1+{\varrho}}/2$ 
and since $F \le 0$ we find that the minimum of
$F$ on $I_{t_0}$ is smaller than  $-{\delta}{\kappa}^{1+{\varrho}}/2$.
\end{proof}

\begin{proof}[Proof of Lemma~\ref{intrem}] 
As before we let $F(t)$ satisfy $F(0) = 0 $ and $F'(t) = f(t)$ where 
$f(t) = f(t,0,{\xi}_0(t))$ satisfies $f(0) = 0$. We have assumed that
the estimate~\eqref{cond2a} holds near~$ \Gamma $ if $f(t)$
vanishes of infinite order at $t=0$.
Observe that the term $x_0'(t)$ in $A_0$ can be estimated by
$|\partial_w f(t,0,{\xi}_0(t))|$ by~\eqref{2}, which gives that
$|A_0(t,0)| \ls |\partial_w f(t,0,{\xi}_0(t))|$. We find from~\eqref{dtomega},
\eqref{2a} and~\eqref{2} that 
$ |\partial_t{\omega}(t,0)| \ls |f(t)| + |\partial_{w}f(t,0,{\xi}_0(t))| 
$ thus~\eqref{intbound} follows if
\begin{equation}\label{intbounda}
| f(t)| + \left| \int_0^t  | f(s)| + A_{0}(s,0) +
A_{1}(s,0,y/{\varrho}) + A_{2}(s,0,y/{\varrho}) \,ds \right|
\gs {\varrho}^{-3}  
\end{equation}
where now $A_0(t) = |\partial_{w}f(t,0,{\xi}_0(t))|$,
\begin{equation}
A_1(t,y/{\varrho}) =  |A(t,0,y/{\varrho},0, {\xi}_0(t),0)|
\end{equation}
and 
\begin{equation}
A_2(t,y/{\varrho}) = \mn{\partial_{{\eta}}^2
B(t,0,y/{\varrho},0,{\xi}_0(t),0)} 
\end{equation}
In the following we shall suppress the $y$ variables
in~\eqref{intbounda}, the results will be uniform when 
$|y| \le c{\varrho}$ for some~$ c > 0 $ since ~\eqref{cond2a} holds near ${\Gamma}$.
Observe that if $|f(s)|$ and
$|A_j(s)|$ are $\ll {\varrho}^{-3}$ for $0 \le j \le 2$ when $s$ is
between $0$ and $t$, then \eqref{intbounda} does not hold.

We shall first consider the case when $|f(t)|  \cong |t|^m$
vanishes of finite order at $t=0$. Then the order must be odd so we
find $F(t) = \int_0^t f(s)\,ds \le 0$ and $c \le |F(t)|/t^{2k} \le C <
0$ for some $k > 0$. Thus we find 
\begin{equation}
{\varrho}^{-3} \ls \left| \int_0^t  | f(s)| + A_0(s) + A_1(s) + A_2(s)\,ds\right| \ls |t| \ls |F(t)|^{1/2k}
\end{equation}
implies that $|F(t)| \gs {\varrho}^{-6k}$. Since ${\lambda} =
{\varrho}^N$ we then obtain ${\lambda}F(t) \ls -{\varrho}^{N-6k} \le
-{\varrho} = {\lambda}^{1/N}$ if $N > 6k$. The case when
$|t|^{2k-1} \cong |f(t)| \gs
{\varrho}^{-3}$ gives that $|t| \gs {\varrho}^{-3/2k-1}$ so 
${\lambda}F(t) \ls -{\varrho}^{N- \frac{6k}{2k-1}} \le -{\varrho}$ if $N > 6$.
Now one of these cases must hold if~\eqref{intbound} holds, so we get
the result in the finite vanishing case.

Next, we consider the infinite vanishing case, then we have assumed
that condition~\eqref{cond2a} holds, which means that 
\begin{equation*}
 \sum_{j=0}^{2}A_j(t) \ls |f(t)|^{\varepsilon} 
\end{equation*}
which implies that $A_j(0) =0$ for all $j$. Now we assume that
~\eqref{intbounda} holds at $t$, by switching $t$ and $-t$ we may
assume $t > 0$.
Then we obtain for  some $s \in
[0, t]$ that $|f(s)| \ge c{\varrho}^{-3}$ or $
A_j(s) \ge c{\varrho}^{-3}$ for some $c > 0$ and $j$. 
Now we define $t_0$ as the smallest $t_0 > 0$  such
that $|f(t_0)| =  c{\varrho}^{-3} $ or $A_j(t_0) =
c{\varrho}^{-3}$ for some $j$, then $t_0 \le t$. Then we obtain from
condition~\eqref{cond2a} in the 
first case that $c{\varrho}^{-3} = |f(t_0)| \ls
|f(t_0)|^{{\varepsilon}} $ and in the second case that 
\begin{equation}
 c{\varrho}^{-3} = A_j(t_0) \ls |f(t_0)|^{{\varepsilon}} 
\end{equation}
Since ${\varrho} = {\lambda}^{1/N}$ we find in both cases that
\begin{equation}\label{33}
 {\lambda}^{-3/{\varepsilon}N} = {\kappa} \le c|f(t_0)| \qquad c > 0
\end{equation}
where ${\lambda} \gg 1$ if and only if ${\kappa} \ll 1$. By taking the 
smallest $ t_0 $ such that~\eqref{33} is satisfied, we find that $ |f(t)| \le   |f(t_0)| $ 
for $ 0 \le t \le t_0 $.
Since $f(t)$ vanishes of infinite order at $t=0$, we find by using
Taylor's formula 
that $ |f(t)| \le C_M |t|^M$ for any positive integer
$M$. (Actually, it suffices to take $ M=1 $.)
Condition~\eqref{33} then gives
\begin{equation}
{\kappa}^{1/M} \ls |f(t_0)|^{1/M} \ls |t_0|
\end{equation}
so by using Lemma~\ref{intlem} with ${\varrho} = 1/M$ we find that 
\begin{equation}
 \min_{0 \le s \le t_0} F(s) \ls - {\kappa}^{1+ 1/M} =
 - {\lambda}^{-3(1+ 1/M)/{\varepsilon}N} \qquad {\lambda} \gg 1
\end{equation}
Thus we find that $\min_{0 \le s \le t_0} F(s) \ls -{\lambda}^{c-1}$
for some  $c > 0$ if $3(1+ 1/M)/{\varepsilon}N < 1$, i.e.,  $N > 3(1 +
1/M)/{\varepsilon}$, which gives Lemma~\ref{intrem}.
\end{proof}

\section{The proof of Theorem 2.7}\label{pfsect} % \ref{mainthm}

We shall use the following modification
of Lemma 26.4.15 in~\cite{ho:yellow}. Recall that $\mn{u}_{(k)}$ is
the $L^2$ Sobolev norm of order $k$ of $u \in C_0^\infty$ and let
$\Cal D'_{{\Gamma}} = \set{u \in \Cal D': \wf (u) \subset {\Gamma}}$ for
$ \Gamma \subseteq T^*\br^n $.

\begin{lem}\label{estlem}
Let 
\begin{equation}\label{estlem0}
 u_{\lambda}(x) =  {\lambda}^{(n-1){\delta}/2}\exp(i{\lambda}{\omega}(x))
 \sum_{j=0}^M 
 {\varphi}_j ({\lambda}^{\delta}x){\lambda}^{-j{\varrho}} \qquad {\lambda} \ge 1
\end{equation}
with ${\varrho} > 0$, $0 < {\delta} < 1$,
${\omega} \in C^\infty (\br^n)$ satisfying $\im 
{\omega}\ge 0$, $|d \re{\omega}| \ge c > 0$, and
${\varphi}_j \in C^\infty_0(\br^n)$.
Here ${\omega}$ and 
${\varphi}_j$ may depend on ${\lambda}$ but uniformly, and
${\varphi}_j$ has fixed  
compact support in all but one of the 
variables, for which the support is bounded by $C{\lambda}^{\delta}$.
Then for any integer $N$ we have
\begin{equation}\label{estlem1}
 \mn{u_{\lambda}}_{(-N)} \le C {\lambda}^{-N} \qquad {\lambda} \ge 1
\end{equation}
If ${\varphi}_0(x_0) \ne 0$ and $\im {\omega}(x_0) = 0$ for some $x_0$ then
there exists $c > 0$ so that
\begin{equation}\label{estlem2}
  \mn{u_{\lambda}}_{(-N)} \ge c
  {\lambda}^{-N-\frac{n}{2} + \frac{(n-1){\delta}}2} \qquad {\lambda} \ge 1
  \qquad \forall\, N
\end{equation}
Let ${\Sigma} = \bigcap_{{\lambda} \ge 1} \bigcup_j  \supp
{\varphi}_j({\lambda}^{\delta}\cdot)$ and let $ {\Gamma}$ be the cone
generated by 
\begin{equation}\label{estlem3}
 \set{(x,\partial{\omega}(x)),\ x
   \in {\Sigma},\ \im {\omega}(x) = 0} 
\end{equation}
Then for any $k$ we find ${\lambda}^ku_{\lambda} \to 0$ in $\Cal
D'_{{\Gamma}}$ so ${\lambda}^k Au_{\lambda} \to 0$ in $C^\infty$ if
$A$ is a pseudodifferential operator such that $\wf(A) \cap {\Gamma} =
\emptyset$. The estimates are uniform if
${\omega} \in C^\infty$ uniformly with fixed lower bound on
$|d\re{\omega}|$, and 
${\varphi}_j \in C^\infty_0$ uniformly with the support condition.
\end{lem}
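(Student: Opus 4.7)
The plan is to analyse $u_\lambda$ on the Fourier side, using that $\im\omega\ge 0$ enables H\"ormander's complex-phase non-stationary phase machinery~\cite[Thm.~7.7.1]{ho:yellow}. After the substitution $y=\lambda^{\delta}x$ the Fourier transform becomes
\[
\hat u_\lambda(\xi) = \lambda^{-(n+1)\delta/2}\int e^{i\Psi_\lambda(y;\xi)}\Phi_\lambda(y)\,dy,\qquad \Psi_\lambda(y;\xi) = \lambda\omega(\lambda^{-\delta}y) - \lambda^{-\delta}y\cdot\xi,
\]
with $\Phi_\lambda(y) = \sum_j\varphi_j(y)\lambda^{-j\varrho}$. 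The gradient $\partial_y\Psi_\lambda = \lambda^{1-\delta}\partial\omega(\lambda^{-\delta}y) - \lambda^{-\delta}\xi$ satisfies $|\partial_y\Psi_\lambda|\gtrsim \lambda^{-\delta}(\lambda+|\xi|)$ on $\supp\Phi_\lambda$ whenever $|\xi|$ lies outside a ``principal shell'' $c_1\lambda\le |\xi|\le c_2\lambda$ (this uses only the uniform bound $c\le |d\re\omega|\le C$). Iterating the complex-phase integration-by-parts operator $L = \overline{\partial_y\Psi_\lambda}\cdot\partial_y/(i|\partial_y\Psi_\lambda|^2)$ $M$ times, and tracking the worst loss in derivatives of the amplitude and of $1/\partial_y\Psi_\lambda$, one obtains for every $M\ge 0$ the estimate
\[
|\hat u_\lambda(\xi)|\le C_M(\lambda+|\xi|)^{-M}\lambda^{M\delta}, \qquad |\xi|\notin [c_1\lambda, c_2\lambda],
\]
uniformly in the hypotheses on $\omega$ and $\varphi_j$.

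The upper bound~\eqref{estlem1} follows from Parseval's identity: the contribution to $\int\langle\xi\rangle^{-2N}|\hat u_\lambda|^2\,d\xi$ from outside the principal shell is $O(\lambda^{-2M(1-\delta)})$, which is negligible for $M$ large; on the shell one has $\langle\xi\rangle^{-2N}\lesssim\lambda^{-2N}$, and a direct computation using the volume of $\supp\Phi_\lambda$ together with $\im\omega\ge 0$ gives $\|u_\lambda\|_{L^2}^2=O(1)$, yielding the $\lambda^{-2N}$ bound.

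For the lower bound~\eqref{estlem2} the same Parseval split, combined with the frequency localization of Step~1, produces $\|u_\lambda\|_{(-N)}^2\gtrsim\lambda^{-2N}\|u_\lambda\|_{L^2}^2-C_M\lambda^{-M}$. Near $x_0$, the conditions $\im\omega(x_0)=0$ and $\im\omega\ge 0$ force $\im\omega(x)\le C|x-x_0|^2$ by Taylor expansion, so $e^{-2\lambda\im\omega}\ge c$ on the ball $|x-x_0|\le c'\min(\lambda^{-1/2},\lambda^{-\delta})$, where moreover $\varphi_0(\lambda^\delta x)$ remains close to $\varphi_0(\lambda^\delta x_0)\neq 0$. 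Direct integration then yields $\|u_\lambda\|_{L^2}^2\gtrsim\lambda^{(n-1)\delta-\max(n/2,\,n\delta)}$, which in either regime $\delta<1/2$ or $\delta>1/2$ exceeds the target $\lambda^{(n-1)\delta-n}$, giving~\eqref{estlem2}. The distributional statement~\eqref{estlem3} is obtained by a localized variant of Step~1 applied to pairings $\langle u_\lambda, v\rangle$ with $v\in C_c^\infty$ whose wave front set avoids $\Gamma$: at each $x\in\Sigma$ with $\im\omega(x)=0$, the covector $\partial\omega(x)$ is not in $\wf(v)$, so $v$ has Fourier decay at $\lambda\partial\omega(x)$ and a further non-stationary phase in the pairing gives rapid decay in $\lambda$.

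The main technical obstacle is the uniform complex-phase non-stationary phase estimate of Step~1. After the rescaling $y=\lambda^\delta x$ each derivative hitting $1/\partial_y\Psi_\lambda$ contributes roughly $\lambda^\delta/(\lambda+|\xi|)$ while each derivative of $\omega(\lambda^{-\delta}y)$ brings $\lambda^{-\delta}$ which combines with $\lambda$ from the prefactor $\lambda\omega$, so one must verify that the net gain $\lambda^\delta(\lambda+|\xi|)^{-1}$ genuinely tends to zero off the principal shell --- this is precisely the condition $0<\delta<1$ combined with $|\xi|\notin[c_1\lambda,c_2\lambda]$, reflecting the microlocal fact that $u_\lambda$ has Fourier support at scale $\lambda$. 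Preserving the uniformity in the $C^k$ bounds on $\omega$ and $\varphi_j$ requires bookkeeping but no new ideas, and gives the uniform version stated at the end of the lemma.
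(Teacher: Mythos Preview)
Your overall strategy---Fourier analysis plus complex-phase non-stationary phase (H\"ormander's Theorem~7.7.1)---matches the paper's. For the upper bound~\eqref{estlem1} the two arguments are essentially the same: you rescale to $y=\lambda^\delta x$ and integrate by parts there, while the paper stays in the $x$ variable, normalizes the phase by $(\lambda+|\xi|)^{-1}$, and absorbs the $\lambda^{k\delta}$ loss coming from derivatives of $\varphi_j(\lambda^\delta\cdot)$. Either bookkeeping gives $|\hat u_\lambda(\xi)|\le C_M\lambda^{M\delta}(\lambda+|\xi|)^{-M}$ off the shell, and the paper's version is slightly cleaner because the phase $\omega(x)$ keeps uniformly bounded higher derivatives.

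For the lower bound~\eqref{estlem2} you take a genuinely different route. You bound $\|u_\lambda\|_{L^2}$ from below by direct integration over a ball $|x-x_0|\lesssim\min(\lambda^{-1/2},\lambda^{-\delta})$, then use the frequency localization of Step~1 to convert this into a bound on $\|u_\lambda\|_{(-N)}$. The paper instead argues by duality: it pairs $u_\lambda$ against a rescaled test function $\psi(\lambda\cdot)$, computes the limit
\[
\lambda^{n-(n-1)\delta/2}e^{-i\lambda\re\omega(0)}\langle u_\lambda,\psi(\lambda\cdot)\rangle\ \longrightarrow\ \varphi_0(0)\int e^{i\langle\partial\re\omega(0),x\rangle}\psi(x)\,dx\ne 0,
\]
and combines this with $\|\psi(\lambda\cdot)\|_{(N)}\le C\lambda^{N-n/2}$. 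Both yield~\eqref{estlem2}; your method in fact gives a slightly stronger exponent, while the paper's duality argument is shorter and does not require first isolating the shell contribution in $L^2$.

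One point needs correction. Your description of the $\mathcal D'_\Gamma$ statement is garbled: a function $v\in C_c^\infty$ has empty wave front set, so the phrase ``$v\in C_c^\infty$ whose wave front set avoids $\Gamma$'' is vacuous. What must be shown (and what the paper does) is that for every $\chi\in C_0^\infty$ and every closed cone $W$ with $(\supp\chi\times W)\cap\Gamma=\emptyset$, the transform $\widehat{\chi u_\lambda}(\xi)$ is $O\big((\lambda+|\xi|)^{-k}\big)$ for $\xi\in W$. This follows from the same non-stationary phase argument as in your Step~1, but now applied to $\chi u_\lambda$ and with $\xi/\lambda$ lying outside a conic neighborhood of $\{\partial\omega(x):x\in\Sigma,\ \im\omega(x)=0\}$, rather than merely outside an annulus $[c_1\lambda,c_2\lambda]$.
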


In the expansion~\eqref{estlem0} we shall take ${\varrho} = 1/N$ and
${\delta} =  3/N$ with $N > 3$, and the cone $ {\Gamma}$ will be generated by  
\begin{equation}
 \set{(t,x_0(t),y_0,0,{\xi}_0(t),0): \ t \in I}
\end{equation}
where $I = \set{t: \im w_0(t) = 0}$.
Observe that the phase function in~\eqref{omegaexp} will satisfy the
conditions in Lemma~\ref{estlem} near
$\set{(t,x_0(t), y_0): \ t \in I}$  
since ${\xi}_0(t) \ne 0$ and $\im {\omega}(t,x) \ge 0$
by Proposition~\ref{eikprop}. Also, we find from
Proposition~\ref{transprop} that the functions ${\varphi}_k$ will
satisfy the conditions in Lemma~\ref{estlem} with~${\delta}= 3/N$
after making the change of
variables $(t,x,y) \mapsto (t-t_0,x-x_0(t), y-y_0)$ since
${\varphi}_0(t_0,x_0(t_0),y_0) = 1$.  
Observe that the conclusions of Lemma~\ref{estlem} are invariant under
uniform changes of coordinates.

\begin{proof}[Proof of Lemma \ref{estlem}]
We shall modify the proof of~\cite[Lemma 26.4.15]{ho:yellow} to this case. 
We have that 
\begin{equation}
 \hat u_{\lambda}({\xi}) = {\lambda}^{(n-1){\delta}/2}\sum_{j=0}^{M}
 {\lambda}^{-j{\delta}} \int
 e^{i{\lambda}{\omega} (x)
 - i\w{x,{\xi}}} {\varphi}_j({\lambda}^{\delta}x)\,dx 
\end{equation}
Let $U$ be a neighborhood of the projection on the second component of the set
in~\eqref{estlem3}. When ${\xi}/{\lambda} \notin U $ then for
${\lambda} \gg 1 $ we find that
$$
\bigcup_j \supp {\varphi}_j({\lambda}^{\delta}\cdot) \ni  x \mapsto
({\lambda}{\omega}(x) - \w{x,{\xi}})/({\lambda} + |{\xi}|)
$$
is in 
a compact set of functions with non-negative imaginary part with a fixed
lower bound on the gradient of the real part. Thus, by integrating by
parts we find for any positive integer $k$ that   
\begin{equation}\label{pfest}
 |\hat u_{\lambda}({\xi})| \le C_k{\lambda}^{((n-1)/2+k){\delta}}({\lambda}
 +|{\xi}|)^{-k}\qquad 
 {\xi}/{\lambda} \notin U \qquad {\lambda} \gg 1
\end{equation}
which gives any negative power of ${\lambda}$ for $k$ large enough,
since ${\delta} <
1$. If $V$ is bounded and $0 \notin \ol V$ then since $u_{\lambda}$ is
uniformly bounded in $L^2$ we find
\begin{equation}
 \int_{{\lambda}V}  |\hat u_{\lambda}({\xi})|^2 (1 +
 |{\xi}|^2)^{-N}\,d{\xi} \le C_V{\lambda}^{-2N}
\end{equation}
which together with~\eqref{pfest} gives~\eqref{estlem1}. If ${\chi}
\in C_0^\infty$ then we may apply~\eqref{pfest} to
${\chi}u_{\lambda}$, thus we find for any
positive integer $k$ that
\begin{equation}
  |\widehat {{\chi}u}_{\lambda}({\xi})| \le
  C{\lambda}^{((n-1)/2+k){\delta}} ({\lambda}+
  |{\xi}|)^{-k} \qquad  {\xi} \in W \qquad {\lambda} \gg 1
\end{equation}
if $W$ is any closed cone with $(\supp
{\chi}\times W) \bigcap  {\Gamma}  = \emptyset$. Thus we find that
${\lambda}^ku_{\lambda} \to 0$ in $\Cal D'_{{\Gamma}}$ for every $k$.
To prove \eqref{estlem2} we may assume that $x_0 = 0$ and take ${\psi}\in
C_0^\infty$. If $\im {\omega}(0) = 0$ and ${\varphi}_0(0) \ne
0$ then since ${\delta} < 1$ we obtain that
\begin{multline}\label{limit}
 {\lambda}^{n -(n-1){\delta}/2} e^{- i \lambda \re w(0)}\w{u_{\lambda}, {\psi}({\lambda}\cdot)} = \int
 e^{i{\lambda}(w(x/{\lambda}) - \re w(0))}{\psi}(x)
 \sum_{j}{\varphi}_j({\lambda}^{{\delta}-1} x)
 {\lambda}^{-j{\delta}}\,dx \\ \to \int 
 e^{i\w{\re \partial_x{\omega}(0),x}}{\psi}(x)
 {\varphi}_0(0)\,dx \qquad {\lambda} \to \infty
\end{multline}
which is not equal to zero for some suitable ${\psi}  \in
C^\infty_0$. Since 
\begin{equation}
 \mn{{\psi}({\lambda}\cdot)}_{(N)} \le C_N {\lambda}^{N-n/2}
\end{equation}
we obtain from~\eqref{limit} that $0 < c \le  {\lambda}^{N + n/2 - (n-1){\delta}/2}
\mn{u_{\lambda}}_{(-N)}$ which gives~\eqref{estlem2} and the lemma. 
\end{proof}

\begin{proof}[Proof of Theorem~\ref{mainthm}] 
By conjugating with elliptic Fourier integral operators and
multiplying with pseudodifferential operators, we may obtain 
that $P^* \in {\Psi}^{2}_{cl}$ is on the form given by Proposition~\ref{prepprop}
microlocally near~${\Gamma} = \set{(t,x_0,y_0,0,{\xi}_0,0):\ t \in I}$. Thus we may assume
\begin{equation} 
P^* = D_t + F(t,x,y,D_t,D_x,D_y) + R
\end{equation}
where  $R\in
{\Psi}^2_{cl}$ satisfies $\wf (R) \bigcap
{\Gamma} = \emptyset$. 

Now we can construct approximate solutions $u_{\lambda}$ on the
form~\eqref{udef} by using the 
expansion~\eqref{exp}. By reducing to minimal bicharacteristics, we
may solve first the eikonal equation by using
Proposition~\ref{eikprop} and then the transport
equations~\eqref{trexpk} by using Proposition~\ref{transprop} with
${\varrho} = {\lambda}^{1/N}$ for $N > 3$. 
Thus after making the change of coordinates $(t,x,y)
\mapsto (t -t_0,x-x_0(t), y-y_0)$ we obtain approximate solutions
$u_{\lambda}$ on the  
form~\eqref{estlem0} in Lemma~\ref{estlem} with ${\varrho} = 1/N$ and
${\delta} = 3/N$. For $N$ large enough, we may choose $K$ and $M$ in
Proposition~\ref{transprop} so that 
$|(D_t + F)u_{\lambda}| \ls {\lambda}^{-k}$ for any $k$.
Now differentiation of $(D_t + F)u_{\lambda}$ can at most give a
factor ${\lambda}$ since ${\delta} < 1$ and a loss of 
a factor $x-x_0(t)$ gives at most a factor ${\lambda}^{1/2}$. 
Because of the bounds on the
support of $u_{\lambda}$ we may obtain that 
\begin{equation} \label{8.13}
\mn{(D_t + F){u_{\lambda}}}_{({\nu})} = \Cal
O({\lambda}^{-N-n})
\end{equation} 
for any chosen ${\nu}$. 
Since ${\varphi}_0(t_0,x_0(t_0),y_0) = 1$ by
Proposition~\ref{transprop} and $\im w(t_0,x_0(t_0)) =
0$ by Proposition~\ref{eikprop}, we find
by~\eqref{estlem1}--\eqref{estlem2} that
\begin{equation}\label{lastest}
{\lambda}^{-N - \frac{n}{2}} \ll
{\lambda}^{-N-\frac{n}{2} + \frac{(n-1){\delta}}2} \ls \mn{u}_{(-N)}  
\ls  {\lambda}^{-N} \qquad \forall\, N \qquad {\lambda} \gg 1
\end{equation}  
Since $u_{\lambda}$ has support in a fixed
compact set that shrinks towards $\set{(t,x_0(t),y_0):\ t \in I}$ as ${\lambda}
\to \infty$, 
we find from Lemma~\ref{estlem} that $\mn{Ru}_{({\nu})}$ and
$\mn{Au}_{(0)}$ are $\Cal O({\lambda}^{-N-n})$ if $\wf(A)$ does not
intersect ${\Gamma}$. Thus we find from~\eqref{8.13}
and~\eqref{lastest} that~\eqref{solvest} does 
not hold when ${\lambda} \to \infty$, so $P$ is not
solvable at~${\Gamma}$ by Remark~\ref{solvrem}. 
\end{proof}

\bibliographystyle{plain}

\bibliography{nec}

\end{document}